\theoremstyle{plain}
\newtheorem{thm}{Theorem}[section]
\newtheorem{prop}[thm]{Proposition}
\newtheorem{lem}[thm]{Lemma}
\theoremstyle{definition}
\newtheorem{defn}[thm]{Definition}
\newtheorem{rem}[thm]{Remark}
\newtheorem{Not}[thm]{Notation}
\newtheorem{Con}[thm]{Convention}
\newtheorem*{ack}{Acknowledgements}
\newcommand{\N}{\mathbb{N}}
\newcommand{\Z}{\mathbb{Z}}
\DeclareMathOperator{\Aut}{Aut}
\DeclareMathOperator{\Out}{Out}
\DeclareMathOperator{\Inn}{Inn}
\DeclareMathOperator{\Fix}{Fix}
\DeclareMathOperator{\Ad}{Ad}
\DeclareMathOperator{\MC}{Mc}
\newcommand{\GG}{\mathcal{G}}
\DeclareMathOperator{\IA}{IA}
\let\phi\varphi
\let\hat\widehat
\title{Centralisers of linear growth automorphisms of free groups}
\author{Naomi Andrew} 
\author{Armando Martino}
\address{Mathematical Sciences, Building 54, University of Southampton, Southampton, SO17 1BJ}
\email{naomi.maths@gmail.com}
\email{A.Martino@soton.ac.uk}
\address{Current Address (NA): Mathematical Institute, Andrew Wiles Building, Observatory Quarter, University of Oxford, Oxford OX2 6GG, United Kingdom}
\date{}
\begin{document}

\begin{abstract}
	In this note we investigate the centraliser of a linearly growing element of $\Out(F_n)$ (that is, a root of a Dehn twist automorphism), and show that it has a finite index subgroup mapping onto a direct product of certain ``equivariant McCool groups'' with kernel a finitely generated free abelian group. In particular, this allows us to show it is VF and hence finitely presented.
\end{abstract}
\maketitle
	
	
\section{Introduction}
	
Let $G$ be a group, and consider the centraliser $C_G(g)$ of an element $g$ of $G$. Understanding centralisers of elements is related to solving the conjugacy problem -- it serves as a kind of dual problem -- and for calculating the virtually cyclic dimension of the group \cite{HughesGuerchSanchezSaldana2023}. It also has implications for actions of $G$ on a CAT(0) space: if an element $g$ has finite order in the abelianisation of its centraliser $C(g)^\textrm{ab}$	then it cannot act loxodromically in any action on a CAT(0) space \cite{Bridson2010CentralisersCAT0}.

In this paper, we consider elements $\Phi$ of $\Out(F_n)$, the outer automorphism group of a free group of rank $n$. It is an open problem, in general, to prove that centralisers of free group outer automorphisms are finitely generated, even though some important cases are well understood: for instance when the outer automorphism is irreducible, its centraliser is virtually cyclic by \cite{Bestvina1997}.

By contrast, in the mapping class group of a surface, the centraliser of a \emph{pure} element -- where the reduction system is fixed up to isotopy and the restrictions to complementary surfaces are either fixed or pseudo-Anosov -- is Type VF by work of Ivanov \cite{Ivanov1992MCGSubgroups} and is known to be finitely generated in general, with an algorithm producing a generating set due to Rafi, Selinger and Yampolsky \cite{Rafi2020CentralisersMCG}. 

A group $G$ is of Type F if it has a finite Eilenberg--Maclane space, its $K(G,1)$. A group is said to be of Type VF if it has a finite index subgroup of Type F. Groups of Type VF are finitely generated and finitely presented; since groups of Type F are necessarily torsion free, being Type VF is the strongest homotopic finiteness property available for a group with finite order elements.

However we note that deducing information about a centraliser from a power -- from a pure mapping class, say -- is not straightforward. Our example in Section~\ref{sec:ex} shows that in general the centraliser of a group element need not be commensurable with the centraliser of some power of the element even if the element has infinite order. (In fact our example can easily be made to be geometric, where the elements are actual mapping classes.) This stands in contrast to the situation within the finite index subgroup $\IA_3(n)$ of $\Out(F_n)$, as investigated by Guerch \cite{Guerch2022}.

In \cite{CullerVogtmann1986CVn} Culler and Vogtmann construct a contractible simplicial complex on which $\Out(F_n)$ acts with finite stabilisers. Together with the existence of a torsion-free, finite index subgroup $\IA_3(n)$ of $\Out(F_n)$, this shows that $\Out(F_n)$ has Type VF. Later, Krstic and Vogtmann used the fixed point subsets of this complex for finite order elements to show that the same is true for centralisers of finite order elements of $\Out(F_n)$ \cite{KrsticVogtmann1993}. In \cite{CentralisersDehnTwists}, Rodenhausen and Wade consider Dehn twist automorphisms of free groups: these preserve certain cyclic splittings of $F_n$ in the same way a Dehn (multi)twist of a surface preserves the cyclic splitting of the fundamental group dual to the twisting curves. They prove that their centralisers are of type VF, using their ``efficient'' representatives as graph of groups automorphisms due to Cohen and Lustig \cite{Cohen1999}. Here we prove the same result for automorphisms of linear growth, which are exactly the roots of the Dehn twists: in some sense this is analogous to passing from the trivial element (centralised by all of $\Out(F_n)$) to a finite order element. 

\subsection*{Results}

Before stating our results more precisely we draw the reader's attention to our convention of writing maps on the \textit{right}. 

\begin{Con}
	\label{right}
Let $G$ be a group. 
\begin{itemize}
	\item We write lower case greek letters, $\phi$, for automorphisms of $G$ and upper case Greek letters, $\Phi$ for outer automorphisms of $G$. 
	\item We write our automorphisms on the right; $g \phi$ is the image of $g \in G$ under the automorphism $\phi$. 
	\item If $x \in G$, we write $\Ad(x)$ to denote the inner automorphism of $G$ so that
	\[
	g \Ad(x) = x^{-1} g x.
	\]  
	
	With this convention we get that $\Ad(x) \Ad(y) = \Ad(xy)$ and $\Ad(x) \phi = \phi \Ad( x \phi)$. 
	\item Accordingly we write $g^x:=x^{-1} g x$ 
\end{itemize}
\end{Con}
	
Our main theorem is the following: 	
	
\begin{restatable}{thm}{centralisers}
	\label{mainthm}
	Let $\Phi$ be a linearly growing element of $\Out(F_n)$. Then $C(\Phi)$ is of type VF. 
	
	More precisely,  $C(\Phi)$  admits a finite index subgroup, $C_0(\Phi)$, which fits into a short exact sequence: 
	\[1 \longrightarrow \Z^m \longrightarrow C_0(\Phi) \longrightarrow N \longrightarrow 1.\]
	
	where $N$ is a finite index subgroup of a finite product of equivariant McCool groups, $\prod_{u\in U}\MC_u$. The indexing set $U$ is a finite set and each \textit{equivariant McCool group}, $\MC_u $, (and hence their product) is of type VF.
\end{restatable}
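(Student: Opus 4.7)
The plan is to leverage the fact that a power $\Phi^k$ of the linearly growing element $\Phi$ is a Dehn twist $D$, associated to a canonical graph of groups decomposition $\GG$ of $F_n$, and to use the existing description of $C(D)$ from Rodenhausen--Wade as scaffolding. Since $C(\Phi) \leq C(\Phi^k) = C(D)$, the refinement I need is to characterise which elements of $C(D)$ also commute with $\Phi$ itself; commuting with $\Phi$ rather than just with $\Phi^k$ translates into an \emph{equivariance} condition with respect to the outer automorphism that $\Phi$ induces on each vertex group, which will be of finite order since $\Phi^k = D$ acts as an inner automorphism on each vertex group.

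First I would pass to a finite index subgroup $C_0(\Phi) \leq C(\Phi)$ consisting of outer automorphisms with representatives that fix the underlying graph of $\GG$ pointwise and preserve each vertex group and each edge group; since the action of $C(\Phi)$ on this finite combinatorial data is by permutations, finite index is automatic. The Rodenhausen--Wade sequence for $C(D)$, intersected with $C_0(\Phi)$, becomes the target sequence. The kernel of the projection to vertex-group-level data is the edge twist lattice, which upon intersecting with $C_0(\Phi)$ becomes the sublattice of edge twist combinations commuting with $\Phi$, that is, the $\Phi$-equivariant combinations; this is a finitely generated free abelian group, giving the $\Z^m$ of the statement. For the quotient, given $\psi \in C_0(\Phi)$ I record its induced outer automorphism on each vertex group, group vertices into $\Phi$-orbits (producing the indexing set $U$), and use orbit representatives: on each representative vertex group $G_u$ the induced automorphism fixes the incident edge groups and commutes with the finite-order outer class $[\Phi|_{G_u}]$, so it lies in the equivariant McCool group $\MC_u = \MC(G_u; \{\text{incident edge groups}\}; [\Phi|_{G_u}])$, which is of type VF by Theorem~\ref{mccool}.

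The principal obstacle I expect is proving exactness and identifying the image precisely. For the kernel one has to rule out the possibility that a non-twist element of $C_0(\Phi)$ acts trivially on every vertex group modulo inner automorphisms; this is reasonably standard in the Dehn twist setting but needs care in the equivariant refinement, because some edge twists that a priori sat in the Dehn twist lattice of $C(D)$ may fail to commute with $\Phi$, altering the book-keeping on both sides. For the image, surjectivity up to finite index requires reconstructing a single outer automorphism of $F_n$ commuting with $\Phi$ from an equivariant tuple of vertex automorphisms, and here I would adopt the paper's suggestion of passing to the mapping torus $F_n \rtimes_\Phi \Z$: the splitting $\GG$ together with the $\Phi$-action extends to a graph of groups decomposition of the mapping torus, and commuting with $\Phi$ corresponds to preserving the suspension direction, so the assembly becomes a graph-of-groups construction one dimension up, avoiding ad hoc calculations inside $F_n$. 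Once the short exact sequence is established, the VF conclusion is formal: $\Z^m$ is of type F, $N$ is a finite index subgroup of $\prod_u \MC_u$ and so is VF by Theorem~\ref{mccool}, VF is closed under extensions, hence $C_0(\Phi)$ is VF, and finally $C(\Phi)$ is VF since $C_0(\Phi)$ has finite index.
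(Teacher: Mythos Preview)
Your proposal is correct and follows essentially the same route as the paper: realise $\Phi$ as a root of an efficient Dehn twist on $\GG$, pass to the finite index subgroup $C_0(\Phi)$ acting trivially on the underlying graph, use the Bass--Jiang/Rodenhausen--Wade filtration to identify the kernel of the vertex-restriction map as a sublattice of the Dehn twist group, show the image lands in a product of equivariant McCool groups indexed by $M$-orbits (your ``$\Phi$-orbits'') of vertices, and establish that the image has finite index by assembling equivariant vertex data into a fibre-and-coset preserving automorphism of the mapping torus $M = F_n \rtimes_\Phi \Z$. One small caution: ``VF is closed under extensions'' is not true in full generality, but here the kernel $\Z^m$ is of type F, so after pulling back a type-F finite index subgroup of $N$ you get a type-F finite index subgroup of $C_0(\Phi)$, which is all that is needed.
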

	
Equivariant McCool groups are defined in Definition~\ref{def:mccool}. They are subgroups of $\Out(F_n)$, taking as input a collection of conjugacy classes of $F_n$ to preserve and a subgroup of $\Out(F_n)$ to centralise. McCool first studied the subgroups of $\Out(F_n)$ preserving a set of conjugacy classes; some examples arise from mapping class groups of punctured surfaces with boundary words corresponding to the given conjugacy classes. Bestvina, Feighn and Handel \cite{BestvinaFeighnHandel2023McCoolGroups} recently showed that for finite subgroups of $\Out(F_n)$, the equivariant McCool groups have Type VF, which is a crucial building block in our theorem.

In \cite{Andrew2022}, we prove that a free-by-cyclic group where the defining automorphism has linear growth has a finitely generated automorphism group. This is related to, but distinct from, the properties of the centraliser of that automorphism.

\subsection*{Proof Strategy}

In Proposition~\ref{invarianttree} we use Cohen and Lustig's work \cite{Cohen1999} to conclude that there is a minimal, irreducible $F_n$-tree with cyclic edge stabilisers which is preserved by $C(\Phi)$. Passing to a finite index subgroup, one obtains a map from the centraliser of $\Phi$ to the product of the outer automorphism groups of the $G_v$ whose kernel is often well understood (in our situation it is a free abelian group -- Lemma~\ref{lem:kernel_is_free_abelian}). 

It is also necessary to understand the image of this map: Proposition~\ref{projection} shows it is contained in a product of equivariant McCool groups, indexing over fewer orbits. To finish, we show that the map is virtually onto this product. To do this we use the following characterisation of centralisers in outer automorphism groups:

\begin{restatable}{lem}{FibrePreserving}
	\label{prop:centraliser_fibre_preserving}
	Let $G$ be a group and $\phi \in \Aut(G)$. Let $M_{\phi}$ be the mapping torus of $\phi$. That is, 
	$$M_{\phi} = G \rtimes_{\phi} \Z = \langle G, t \ : \ w^t = w \phi \text{ for all } w \in G \rangle.$$
	
	Then some $\chi \in \Aut(G)$ commutes with $\phi$ as an \textit{outer}   automorphism (which is to say that the commutator of the automorphisms is an inner automorphism, $[\phi, \chi] \in \Inn(G) $ ) if and only if for some $g \in G$, the map:
	$$
	\begin{array}{rcl}
		t & \mapsto & tg \\
		w & \mapsto & w\chi,
	\end{array}
	$$
	defines an automorphism of $M_{\phi}$. 
\end{restatable}

As in Convention~\ref{right}, the notation $w \chi$ indicates that the automorphism is acting on the right. We refer to the automorphisms of $M_\phi$ that take this form as \emph{fibre and coset preserving}.

	Equipped with this lemma, we can construct preimages. The mapping torus $M_\phi$ acts on the same tree, and we show in Proposition~\ref{finiteindex} that (perhaps up to finite index) the fibre and coset preserving extensions of each vertex McCool group can be assembled to form a fibre and coset preserving automorphism of the whole mapping torus.  
	The restriction to $G$ is then an automorphism $\chi$ whose outer class commutes with $\Phi$. 

A similar strategy should be viable for more groups and automorphisms, provided a sufficiently invariable action on a tree exists, and the relevant equivariant McCool groups can be understood.
	
\begin{ack}
	The first author was supported by EPSRC under grant number EP/W522417/1, and the second author by Leverhulme Trust Grant RPG-2018-058.
	We would also like to thank Ric Wade for comments on an earlier draft of this paper, and the anonymous referee for their constructive feedback and suggestions.
\end{ack}

\section{An Illustrative Example}
\label{sec:ex}

In this section we give a detailed example to illustrate Theorem~\ref{mainthm}. Consider the free group of rank 4, $F=F(a,b, \alpha, \beta)$. Let $I: \langle a,b \rangle \to \langle \alpha, \beta \rangle$ be the isomorphism sending $a$ to $\alpha$ and $b$ to $\beta$.
Let $g \in \langle a, b \rangle$ and let $\gamma=gI \in \langle \alpha, \beta \rangle$. (In what follows it will be easier if these are not proper powers, so we shall assume that). 

Now define the automorphism, $\phi$, of $F$ by: 
$$
\begin{array}{rcl}
	& \phi & \\
	a & \mapsto & \alpha \\
	b & \mapsto & \beta \\
	\alpha & \mapsto & a^g \\
	\beta & \mapsto & b^g. \\
\end{array}
$$

The outer automorphism $\Phi$ (represented by $\phi$) has linear growth, but is not a Dehn Twist. 

However, $\Phi^2$ is a Dehn Twist: 
$$
\begin{array}{rcl}
	& \phi^2 & \\
	a & \mapsto & a^g \\
	b & \mapsto & b^g \\
	\alpha & \mapsto & \alpha^{\gamma} \\
	\beta & \mapsto & \beta^{\gamma} \\
\end{array}
$$

There are three isogredience classes (genuine automorphisms in the outer class of $\Phi^2$ which are not conjugate by any inner automorphism) in $\Phi^2$ whose fixed subgroup has rank at least two. These fixed subgroups form the vertex groups of a graph of groups of which $\Phi^2$ can be realised as an (efficient) Dehn Twist and is one way to construct the graph of groups, $\GG$, below. They are:   
$$
\begin{array}{lcl}
	\Fix  \phi^2 \Ad(g^{-1})& = & \langle a,b \rangle \\
	\Fix \phi^2 & = & \langle g,\gamma \rangle \\
	\Fix \phi^2  \Ad(\gamma^{-1})& = & \langle \alpha, \beta \rangle. \\
\end{array}
$$

(We remind the reader of our Convention~\ref{right} that we place automorphisms on the right and that throughout the paper $\Ad(x)$ denotes the inner automorphism $h \mapsto x^{-1}hx$.)

\medskip

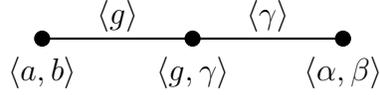
\begin{figure}
	\begin{tikzpicture}
		\draw[thick] (0,0) -- (4,0); \node at (1,0.3) {$\langle g \rangle$}; \node at (3,0.3) {$\langle \gamma \rangle$};
		\draw[fill] (0,0) circle [radius=0.1]; \node at (0,-0.45) {$\langle a,b \rangle$};
		\draw[fill] (2,0) circle [radius=0.1]; \node at (2,-0.45) {$\langle g, \gamma \rangle$};
		\draw[fill] (4,0) circle [radius=0.1]; \node at (4,-0.45) {$\langle \alpha,\beta \rangle$};
	\end{tikzpicture}
	\caption{A graph of groups $\GG$ on which $\Phi$ is realised as the root of a Dehn twist.}
	\label{fig:gog}
\end{figure}

Returning to $\Phi$, this is represented as a graph of groups automorphism, $R$, on the following graph of groups, $\GG$, shown in Figure~{\ref{fig:gog}}. See Section~\ref{sec:preliminaries} for an explanation of the formalism. Perhaps the most important thing to note at this stage is that the conjugation will take place outside the map on vertex groups.

Let the underlying graph $\Gamma$ have three vertices $u,v,w$ and two edges, $e_u=(u,v)$ and $e_w=(w,v)$. The vertex groups are $G_u=\langle a,b \rangle$, $G_v=\langle g, \gamma \rangle$ and $G_w=\langle \alpha, \beta \rangle$, the edge groups are infinite cyclic, with $G_{e_u}= \langle g \rangle$ and $G_{e_w}=\langle \gamma \rangle$ 
Then $\pi_1(\GG) \cong G_u*G_w \cong F$. 

The underlying graph map $R_{\Gamma}$ interchanges $u$ and $w$, and the edges $e_u$ and $e_w$, and fixes $v$. The vertex group isomorphisms are then: 
\begin{align*}
	R_u: G_u \to G_w & ;  R_u(a) = \alpha, R_u(b) = \beta \\
	R_v: G_v \to G_v & ;  R_v(g) = \gamma, R_v(\gamma) = g \\
	R_w: G_w \to G_u & ;  R_w(\alpha) = a, R_w(\beta) = b.
\end{align*}

The edge group maps should interchange the generators $g$ and $\gamma$, and set $\delta_{e_u}=1$, $\delta_{\overline{e}_u}=1$, $\delta_{e_w}= 1$ and $\delta_{\overline{e}_w}=g$, inducing the conjugation of the third and fourth generators. 
It is straightforward to check that the automorphism $R$ induces on $\pi_1(\GG,v)$ is  $\phi$. The automorphism $R^2$ is a Dehn Twist of $\GG$.

There are therefore three equivariant McCool groups: 		
$$
\begin{array}{rcl}
	\MC(G_u ; \langle g \rangle; R^2_u) & = & \MC(G_u ; \langle g \rangle; 1_{G_u}) \\
	\MC(G_v ; \{ \langle g \rangle, \langle \gamma \rangle \}; R_v) & = & 1 \\
	\MC(G_w ; \langle \gamma \rangle; R^2_w) & = & \MC(G_w ; \langle \gamma \rangle; 1_{G_w}), \\
\end{array}
$$
which follows since both $R^2_u$ and $R^2_w$ are inner, and since the automorphisms of a free group of rank 2 which preserve the conjugacy classes of a basis are all inner.

A graph of groups for $M = F \rtimes_\phi \Z$ is shown in Figure~\ref{fig:gog2}.

\begin{figure}
	\begin{tikzpicture}
		\draw[thick] (0,0) -- (2,0); \node at (1,0.3) {$\langle g, t^2 \rangle$};
		\draw[fill] (0,0) circle [radius=0.1]; \node at (0,-0.45) {$\langle a,b, t^2 \rangle$};
		\draw[fill] (2,0) circle [radius=0.1]; \node at (2,-0.45) {$\langle g, \gamma, t \rangle$};
	\end{tikzpicture}
	\caption{A graph of groups $\mathcal{M}$ for the free-by-cyclic group $M$.}
	\label{fig:gog2}
\end{figure}
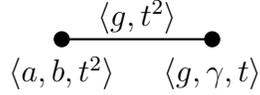

By Theorem~\ref{mainthm}, the centraliser $C(\Phi)$ has a finite index subgroup $C_0(\Phi)$ which maps onto 
$$
\MC(G_u ; \langle g \rangle; R^2_u)  \times \MC(G_v ; \{ \langle g \rangle, \langle \gamma \rangle \}; R_v) \cong \MC(G_u ; \langle g \rangle; 1_{G_u}).
$$			

To demonstrate how to construct pre-images, let $\psi_u$ be any automorphism of $G_u=\langle a,b \rangle$ which fixes $g$ (every element of the McCool group $\MC(G_u ; \langle g \rangle; 1_{G_u})$ has such a representative). Write $\psi_w = I^{-1} \psi_u I$ for the corresponding automorphism of $G_w$ (formally changing every $a$ to $\alpha$ and so on), which fixes $\gamma$. 

Then if we define $\psi = \psi_u*\psi_w$ to be $\psi_u$ on $G_u$ and $\psi_w$ on $G_w$, its outer class $\Psi$ commutes with $\Phi$. 
Note that 
if $\psi_u$ (fixing $g$) and $\psi'_w$ (fixing $\gamma$) are not `conjugate' by $I$, then $\Psi$ 
commutes with $\Phi^2$ but not $\Phi$. 

The kernel in the short exact sequence 
does not consist of all Dehn Twists on $\GG$. Consider the Dehn Twist $D$ on $\GG$ given by:
$$
\begin{array}{rcl}
	& D & \\
	a & \mapsto & a^{g^r} \\
	b & \mapsto & b^{g^r} \\
	\alpha & \mapsto & \alpha^{\gamma^s} \\
	\beta & \mapsto & \beta^{\gamma^s} \\
\end{array}
$$ 

Varying $r$ and $s$ describes a free abelian subgroup of rank 2, accounting for all the Dehn Twists on $\GG$.

Then, conjugating $\phi$ by $D^{-1}$, we see
$$
\begin{array}{rcl}
	& D \phi D^{-1} & \\
	a & \mapsto & \alpha^{\gamma^{r-s}} \\
	b & \mapsto & \beta^{\gamma^{r-s}} \\
	\alpha & \mapsto & a^{g^{s-r+1}} \\
	\beta & \mapsto & b^{g^{s-r+1}} \\
\end{array}
$$ 

So $D$ does not always centralise $\Phi$ (even as an outer automorphism), although it does centralise $\Phi^2$. They commute precisely when $r=s$, giving a rank one subgroup. The short exact sequence of Theorem~\ref{mainthm} is
\[1 \longrightarrow \Z \longrightarrow C_0(\Phi) \longrightarrow \MC(G_u;\{ \langle g \rangle \} ; 1_{G_u}) \longrightarrow 1.\]


We can also apply this theorem to the structure of the centraliser of $\Phi^2$ (represented by the Dehn Twist $R^2$ on $\GG$). The exact sequence, agreeing with \cite[Theorem 3.8]{CentralisersDehnTwists}, is: 
\[1 \longrightarrow \Z^2 \longrightarrow C_0(\Phi^2) \longrightarrow \MC(G_u;\{ \langle g \rangle \} ; 1_{G_u}) \times \MC(G_w;\{ \langle \gamma \rangle \} ; 1_{G_w}) \longrightarrow 1.\]

Note that since $\Phi$ acts by permuting the only two edges of $\Gamma$, $C_0(\Phi)$ is an index 2 subgroup of $C(\Phi) = \langle C_0(\Phi), \Phi \rangle$. Since $\Phi$ commutes with $\Phi^2$, the same is true of $C(\Phi^2)$.

Although the difference in the ranks of the kernels already imply that $C_0(\Phi)$ is not commensurable with $C_0(\Phi^2)$, for many choices of $g \in \langle a,b \rangle$ the right hand terms are also not commensurable. For instance, let $g=a^{-1}b^{-1}ab$ so that $\MC(G_u;\{ \langle g \rangle \} ; 1_{G_u})$ is precisely the mapping class group of a (once punctured) torus. Then the right hand side of the exact sequence for $\Phi$ contains $F_2$ with finite index, whereas for $\Phi^2$ it contains $F_2 \times F_2$ with finite index.

Moreover with this $g=a^{-1}b^{-1}ab$, the graph of groups from Figure~\ref{fig:gog} encodes a surface: take a torus with a single boundary component for $\langle a,b \rangle$, another torus with a single boundary component for $\langle \alpha, \beta \rangle$  and a sphere with 3 boundary components (a pair of pants) for $\langle g, \gamma \rangle$. Now glue the boundary circles of the tori to \textit{different} boundary circles of the 3-holed sphere. The resulting surface is a genus 2 orientable surface with a single boundary component. In this situation, $\Phi$ can be realised as a homeomorphism of that surface and $\Phi^2$ is the product of two Dehn twists along disjoint curves, corresponding to $g$ and $\gamma$.

\section{Preliminaries}
\label{sec:preliminaries}

\subsection*{Graphs of groups}
	
We follow Bass \cite{Bass1993} and define a graph of groups $\GG$ to consist of a graph $\Gamma$ (as defined by Serre \cite{Serre2003}, with edges in pairs $\{e,\overline{e}\}$ and maps $\iota(e)$ and $\tau(e)$ indicating the initial and terminal vertices of $e$), together with groups $G_v$ for every vertex and $G_e = G_{\overline{e}}$ for every edge, and monomorphisms $\alpha_e:G_e \to G_{\tau(e)}$ for every edge.
	
The path group $P(\GG)$ is the group generated by the vertex groups and the edges of $\GG$, subject to relations $e\alpha_e(g)\overline{e}=\alpha_{\overline{e}}(g)$ for $g \in G_e$. Note that taking $g=1$ this gives that $e^{-1}=\overline{e}$, as expected.
	
A \emph{path} (of length $n$) in $P(\GG)$ is a sequence $g_0e_1g_1 \dots e_ng_n$, where each $e_i$ has $\iota(e_i)=v_{i-1}$ and $\tau(e_i)=v_i$ for some vertices $v_i$ (so there is a path in the graph), and each $g_i \in G_{v_i}$. A \emph{loop} is a path where $v_0=v_n$. The relations in $P(\GG)$ put an equivalence relation on the set of paths, analogous to homotopy, which we work up to throughout. 
	
The \emph{fundamental group of $\GG$} at a vertex $v$ is the set of loops in $P(\GG)$ at $v$, inheriting the multiplication of $P(\GG)$ (working up to the above equivalence). It is denoted $\pi_1(\mathcal{G},v)$. Thus we think of $\pi_1(\mathcal{G},v)$ as a subgroup of $P(\GG)$.

One can also define a tree with an action by $\pi_1(\GG,v)$; the structure theorem for Bass--Serre theory asserts that the quotient graph of groups associated with such an action is sufficient to reconstruct it (up to isomorphisms). We will take the following construction of the quotient graph of groups for a given action on a tree.

\begin{defn}
	\label{def:quotient_gog}
	Suppose a group $G$ acts on a tree $T$. The \emph{quotient graph of groups} $\GG$ for this action has the quotient graph for the action as its underlying graph. We think of the edges and vertices of $\GG$ as a subset of the edges and vertices of $T$, choosing a representative for each $G$-orbit. (The choice should respect the edge inversion: if $e$ is chosen, so is $\overline{e}$.)
	
	The vertex groups are vertex stabilisers $G_v$ of the representative of each $G$-vertex orbit, and the edge groups are the edge stabilisers $G_e$, again of the representative of each edge orbit.

	Then, for each edge $e$ of $\GG$, there are group elements $g_e^{-}$ and $g_e^{+}$ of $G$ such that $\iota(e) g_e^{-}$ and $\tau(e) g_e^{+}$ are our chosen orbit representatives in $T$. 
	The choice of $g_e^{-}, g_e^{+}$ is not canonical, but one choice is made from the start, and should be compatible with the edge inversion: $g_{\overline{e}}^{-} = g_e^{+}$.

	The edge monomorphisms $\alpha_e$ are given by the inclusion of the edge stabiliser into the vertex stabiliser, followed by conjugation by $g_e^{+}$: \[
	G_e \subseteq G_{\tau(e)} \to G_{\tau(e)}^{g_e^{+}} = G_{\tau(e)g_e^{+}}. \]
\end{defn}

It is common to require representatives of vertex and edge orbits that form subtrees of $T$, in which case at least one of $g_e^{-}$ and $g_e^{+}$ can be chosen to be trivial. However, we found it more convenient to adopt this more symmetric notation.

A result we will need later is the following: 

\begin{prop}
	\label{extendhoms}
	Let $\GG$ be a (connected) graph of groups and $v$ a vertex of the underlying graph. Let  $P(\GG)$ be the associated path group and $\pi_1(\GG,v)$ the fundamental group at $v$, thought of as a subgroup of $P(\GG)$. 
	
	Then for any homomorphism, $\rho: \pi_1(\GG,v) \to Q$, where $Q$ is any group, there exists a homomorphism $\hat{\rho}: P(\GG) \to Q$, extending $\rho$ in the sense that the following diagram commutes:

	\begin{center}
		\begin{tikzcd}
		P(\GG) \ar{r}{\hat{\rho}}  &Q \\
			\pi_1(\GG,v) \ar{u}{\subseteq}  \ar{ru}[swap]{\rho} & 
		\end{tikzcd}
	\end{center}
	
\end{prop}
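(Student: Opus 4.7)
The plan is to follow a spanning tree construction. Choose a spanning tree $T$ of the underlying graph $\Gamma$, and for each vertex $w$ let $p_w \in P(\GG)$ be the word obtained by reading off the edges (interleaved with identity elements of the intermediate vertex groups) of the unique reduced path in $T$ from $v$ to $w$. In particular $p_v$ is the empty word. Then for any $g \in G_w$ the word $p_w g p_w^{-1}$ is a loop at $v$ and so represents an element of $\pi_1(\GG, v)$, and similarly for any edge $e$ the word $p_{\iota(e)} e p_{\tau(e)}^{-1}$ is a loop at $v$. This is the device that lets us turn each generator of $P(\GG)$ into something to which $\rho$ can be applied.

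I would then define $\hat{\rho}$ on the generating set of $P(\GG)$ by $\hat{\rho}(g) = \rho(p_w g p_w^{-1})$ for $g \in G_w$, and $\hat{\rho}(e) = \rho(p_{\iota(e)} e p_{\tau(e)}^{-1})$ for each edge $e$. To show this extends to a homomorphism from $P(\GG)$, the defining relations must be checked. The multiplicative relations internal to each vertex group $G_w$ are immediate, since conjugation by $p_w$ is a homomorphism $G_w \to \pi_1(\GG, v)$. For the edge relation $e \alpha_e(g) \bar{e} = \alpha_{\bar{e}}(g)$ with $g \in G_e$, writing $w = \iota(e)$ and $w' = \tau(e)$, a direct expansion gives
\[
\hat{\rho}(e)\hat{\rho}(\alpha_e(g))\hat{\rho}(\bar{e}) = \rho\bigl(p_w e \alpha_e(g) \bar{e} p_w^{-1}\bigr) = \rho\bigl(p_w \alpha_{\bar{e}}(g) p_w^{-1}\bigr) = \hat{\rho}(\alpha_{\bar{e}}(g)),
\]
where the middle equality uses that the relation already holds in $P(\GG)$ (hence inside the loop $p_w(\,\cdot\,)p_w^{-1} \in \pi_1(\GG,v)$ to which $\rho$ is being applied).

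Finally, to see $\hat{\rho}$ extends $\rho$, take a loop $\ell = g_0 e_1 g_1 \cdots e_n g_n$ based at $v$ and expand $\hat{\rho}(\ell)$ as a product: each interior factor $p_{w_i}^{-1}$ coming from $\hat{\rho}(e_i)$ cancels with the $p_{w_i}$ opening $\hat{\rho}(g_i)$ or $\hat{\rho}(e_{i+1})$, and the outer $p_v$ factors are trivial, leaving $\rho(\ell)$. The extension is visibly not unique (different spanning trees give different $\hat{\rho}$'s agreeing on $\pi_1(\GG, v)$). The main point calling for care, rather than a genuine obstacle, is the bookkeeping around edge inversion and the symmetric convention for $g_e^\pm$ introduced in Definition \ref{def:quotient_gog}; once the conventions are pinned down the verification of the edge relation is essentially forced.
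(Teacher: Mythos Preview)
Your proof is correct and follows essentially the same spanning-tree construction as the paper: both choose a maximal tree, use the tree paths $p_w$ (the paper's $\sigma_u$) to conjugate generators into $\pi_1(\GG,v)$, and verify the edge relation by the same telescoping computation. The only cosmetic difference is that the paper states the tree-edge case $\hat{\rho}(e)=1$ separately before noting it is subsumed by the general formula, and checks the extension property via the standard generating set rather than by expanding an arbitrary loop.
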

\begin{proof}
Take as a generating set for $P(\GG)$ the collection of all vertex groups $G_v$ along with all edges $e$. Pick a maximal tree $S$ (so as not to confuse it with the Bass-Serre tree) of $\GG$. For every vertex $u$ let $\sigma_u$ be the unique reduced path in $S$ from $v$ to $u$. We define $\hat{\rho}$ as follows: 
$$
\begin{array}{rcll}
	\hat{\rho}(e) & = & 1 &\text{for all } e \in S, \\
	\hat{\rho}(e) & = & \rho(\sigma_{\iota(e)} e {\sigma_{\tau(e)}}^{-1}) &\text{for all } e \not\in S, \\ 
	\hat{\rho}(g_u) & = & \rho(\sigma_u g_u \sigma_u^{-1}) &\text{for all } g_u \in G_u. 
\end{array}
$$

(Note that the second condition applied to all edges implies the first, which we have only included for clarity).

To check that $\hat{\rho}$ defines a homomorphism to $Q$ we simply need to check that all the relations in $P(\GG)$ are sent to the trivial element by $\hat{\rho}$. There are two kinds of relations; those within a vertex group and the edge relations. For the first, if $g_uh_uk_u=1$ then,
$$
\hat{\rho}(g_u h_u k_u) = \rho(\sigma_u g_u h_u k_u \sigma_u^{-1}) = 1. 
$$
For an edge relation, if we take some $x \in G_e$ and consider the relation $e \alpha_e(x) e^{-1} = \alpha_{\overline{e}}(x)$. Let $u = \iota(e), w=\tau(e)$. Then in $\pi_1(\GG,v)$ we have the corresponding relation
$$
\sigma_u e \alpha_e(x) e^{-1} \sigma_u^{-1} = \sigma_u \alpha_{\overline{e}}(x) \sigma_u^{-1}, 
$$
and hence
$$\rho(\sigma_u e \alpha_e(x) e^{-1} \sigma_u^{-1} ) = \rho(\sigma_u \alpha_{\overline{e}}(x) \sigma_u^{-1} ).
$$

But now, $$
\begin{array}{rcl}
\hat{\rho}(e \alpha_e(x) e^{-1}) &  = &  \hat{\rho}(\sigma_u e \sigma_w^{-1} \sigma_w \alpha_e(x) \sigma_w^{-1} \sigma_w e^{-1} \sigma_u^{-1}) \\ \\& = & \rho(\sigma_u e \sigma_w^{-1}) \rho(\sigma_w \alpha_e(x) \sigma_w^{-1}) \rho(\sigma_w e^{-1} \sigma_u^{-1}) \\ \\ & = & \rho(\sigma_u e \alpha_e(x) e^{-1} \sigma_u^{-1} ) \\  \\ & = &  \rho(\sigma_u \alpha_{\overline{e}}(x) \sigma_u^{-1} ) \\ \\ & = & \hat{\rho}( \alpha_{\overline{e}}(x) ).
\end{array} 
$$

This verifies that $\hat{\rho}$ is a well-defined homomorphism from $P(\GG)$ to $Q$. The fact that it extends $\rho$ is immediate from the definition; we simply take the generators of $\pi_1(\GG,v)$ to be the elements $\sigma_{\iota(e)} e {\sigma_{\tau(e)}}^{-1}$ where $e$ ranges over the edges of $\GG$, and $\sigma_u g_u \sigma_u^{-1}$, where $g_u$ ranges over the vertex groups, $G_u$.
\end{proof}

\subsection*{Automorphisms of graphs of groups}

Given a graph of groups $\GG$ an automorphism $F$ of $\GG$ consists of a graph automorphism $F_\Gamma$, group isomorphisms $f_v:G_v \mapsto G_{F_\Gamma(v)}$ and $f_e:G_e \mapsto G_{F_\Gamma(e)}$, and elements $\delta_e \in G_{F_\Gamma(\tau(e))}$ for each edge, satisfying $f_{\tau(e)}(\alpha_e(g))=\delta_e^{-1}(\alpha_{F_\Gamma(e)}(f_e(g)))\delta_e$. 
	
An automorphism of $\GG$ induces a group automorphism of the path group, sending elements of $G_v$ to their image in $f_v(G_v)$ and edges $e$ to $\delta_{\overline{e}}^{-1} F_\Gamma(e) \delta_e$, and in turn an isomorphism from $\pi_1(\GG,v)$ to $\pi_1(\GG,F(v))$. If $F(v)=v$ this is an element of $\Aut(\pi_1(\GG,v))$; otherwise different choices of a path in $P(\GG)$ joining $v$ and $F(v)$ give elements that differ in $\Aut(\pi_1(\GG,v))$ by conjugation (by the loop formed by concatenating the paths) and so this gives a well-defined element of $\Out(\pi_1(\GG,v))$.

As one might expect, automorphisms of graphs of groups form a group under composition. The graph and group maps compose as normal; the $\delta$-values of the new map can be seen by considering the image of an edge (as an element of the path group) and reading off the `non-edge' terms. 

Moreover, any automorphism of $\GG$ induces an automorphism of the associated Bass--Serre tree $T$. Here, by an automorphism of $T$ we mean a graph map on $T$ which sends vertices to vertices and edges to edges. An automorphism of $T$ arising in this way will permute $G$-orbits. 
	
(Note that Bass allows for a more general notion of isomorphism, inducing additional conjugations on the fundamental group; these do not induce extra outer automorphisms, so we do not need to consider them here.)
	
\subsection*{Dehn Twists and Linear Growth outer automorphisms of free groups} One kind of automorphism of a graph of groups is a \emph{Dehn Twist}, where the graph map and group homomorphisms are required to be trivial, and the values $\delta_e$ are required to be the image $\alpha_e(g_e)$ of an element $g_e \in Z(G_e)$. Together with the edge relations, these give \emph{twistors} $z_e = \delta_e\delta_{\overline{e}}^{-1}$ for each edge $e$ (and $z_e=z_{\overline{e}}^{-1}$) where as an element of the path group $e \mapsto z_ee$. The induced element of $\Out(\pi_1(\GG,v))$ depends only on these twistors, and not on the values of $\delta_e$ used to define them; provided $G$ is centreless the subgroup they comprise is isomorphic to a quotient of the direct product of the centres $Z(G_e)$  taken over the geometric edges. 
	
The maps and elements do not uniquely determine the Dehn Twist, as an automorphism, and following \cite{KrsticLustigVogtmann}, we say that a graph of groups automorphism \emph{represents} a Dehn Twist if it induces the same element of $\Out(G)$ as the Dehn Twist. For example, replacing the group homomorphisms with inner automorphisms, adjusting the values of the $\delta$ appropriately, realises the same (outer) automorphism of the fundamental group. In fact, by \cite[Proposition 4.6(1)]{KrsticLustigVogtmann}, this is the only way to get alternative representatives in the case we are concerned with, where $G$ is a finitely generated free group and $\GG$ has edge groups maximal cyclic in adjacent vertex groups.
	
	
Cohen and Lustig \cite{Cohen1999} define an \emph{efficient Dehn twist} on a graph of groups $\GG$ with $\pi_1(\GG)=F_n$. This adds certain conditions which amount to saying that the $F_n$-action is very small and that there are no `unused edges'; in particular this implies that the graph of groups is finite, vertex groups are free groups of rank at least $2$, and edge groups are maximal infinite cyclic. For our purposes it is enough to note that every Dehn twist has an efficient realisation. 
	
Suppose $\GG$ is a graph of groups, and $D$ a Dehn Twist on $\GG$. Say that a graph of groups automorphism $R$ is a \emph{root of $D$} if there is some $k$ such that $R^k$ represents $D$, and $R_e(z_e)=z_{R_\Gamma(e)}$ for all edges $e$ of $\GG$.

Given an element $\Psi$ of $\Out(F_n)$, consider the effect of iterating $\Psi$ on elements of $F_n$ by fixing a basis and considering the length of the shortest representative of the conjugacy class $g\Psi^k$. Say $\Psi$ is linearly growing if the growth in this sense of every element is bounded above by $Ak$, and if the growth of some element is bounded below by $Bk$, for some constants $A$ and $B$. (In general, elements of $\Out(F_n)$ may be exponentially growing or polynomially growing, with the possible degrees bounded above by $n$ -- see \cite{Bestvina1992}, \cite{Bestvina2000} and \cite{Levitt2009}.) 

However, the property of these automorphisms we make use of throughout this paper -- which may be taken as the definition -- is the following.
	
\begin{thm}[{\cite[Proposition 5.3]{KrsticLustigVogtmann}}]
	\label{lineartodehn}
	Let $\Phi$ be a linearly growing element of $\Out(F_n)$. Then $\Phi$ is realised by a root of an efficient Dehn Twist on a graph of groups $\GG$.
\end{thm}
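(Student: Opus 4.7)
The plan is to leverage the graph of groups structure provided by Theorem \ref{lineartodehn} and reduce $C(\Phi)$, after passing to a finite index subgroup, to data attached to each vertex group that commutes with the induced finite-order automorphism.

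First, I would apply Theorem \ref{lineartodehn} to realise $\Phi$ as a root $R$ of an efficient Dehn twist $D = \Phi^k$ on a graph of groups $\GG$ with $\pi_1(\GG) = F_n$, and let $T$ be the associated Bass-Serre tree. Since $C(\Phi) \subseteq C(D)$, elements of $C(\Phi)$ preserve the equivalence class of $T$, in the spirit of Rodenhausen-Wade's treatment for Dehn twists. There are only finitely many possibilities for how such an element can act on the underlying finite quotient graph or permute the (cyclic) edge groups, so passing to the kernel of a map to a suitable finite group yields a finite index subgroup $C_0(\Phi) \leq C(\Phi)$ whose elements are each realised by an automorphism of $\GG$ with trivial graph map, and acting as the identity on each edge group.

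Given $\Psi \in C_0(\Phi)$ realised as an automorphism of $\GG$, restriction to each vertex group $G_v$ yields a well-defined outer automorphism (the $\delta$ ambiguity being absorbed into inner automorphisms). The relation $\Psi \Phi = \Phi \Psi$ forces this restriction to commute with the outer automorphism that $\Phi$ induces on the vertex group; and because $\Phi^k = D$ is trivial on each vertex group up to inner automorphism, the induced $\Phi$-action on each vertex group has finite order. Choosing a representative $u$ from each $\Phi$-orbit on vertices of $\GG$, these restrictions assemble into a homomorphism
\[
\rho : C_0(\Phi) \longrightarrow \prod_{u \in U} \MC_u,
\]
where $\MC_u$ is the equivariant McCool group of $G_u$ with respect to this induced finite-order automorphism, relative to the incident edge groups. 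The kernel of $\rho$ consists of $\GG$-automorphisms acting as the identity on every vertex group; such an automorphism is determined by its collection of $\delta$-values modulo the edge relations, and the requirement of being compatible with the twistor-permuting action of $R$ (as a root of a Dehn twist) cuts these down to a finitely generated free abelian group $\Z^m$ of rank at most the number of edge orbits in $T$.

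To conclude, one needs to show that the image $N = \rho(C_0(\Phi))$ has finite index in $\prod_u \MC_u$: given a tuple of equivariant McCool automorphisms on the representative vertex groups, one must assemble them (up to finite index) into an automorphism of $\GG$ that both commutes with $\Phi$ and respects the graph of groups structure, using Proposition \ref{extendhoms} to promote consistent local data to a homomorphism on all of $P(\GG)$. Once this is done, Theorem \ref{mccool} gives that each $\MC_u$ is of type VF, hence so is the finite product and its finite index subgroup $N$; since extensions of VF groups by finitely generated free abelian kernels are VF, $C_0(\Phi)$ is VF, and therefore so is $C(\Phi)$. I expect the main obstacle to be this finite index image step: one must simultaneously satisfy the matching conditions imposed by the edge groups at both endpoints of each edge of $\GG$ and ensure compatibility with the $R$-action that permutes vertex groups in the same $\Phi$-orbit, without losing control of which tuples of McCool elements actually extend.
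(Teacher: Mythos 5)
Your proposal does not prove the statement in question: it is circular, since your very first step is to ``apply Theorem~\ref{lineartodehn}'' to obtain the graph of groups $\GG$, which is exactly the assertion you are being asked to establish. What you have actually sketched is the argument for the paper's main result, Theorem~\ref{mainthm} (the structure of $C(\Phi)$ as an extension of a subgroup of a product of equivariant McCool groups by a free abelian kernel, and the deduction of type VF), not the realisation statement that a linearly growing $\Phi$ is realised by a root of an efficient Dehn twist.

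A proof of Theorem~\ref{lineartodehn} lives entirely upstream of the centraliser analysis and uses quite different inputs. One must show: (1) every polynomially growing element of $\Out(F_n)$ has a power which is UPG (Bestvina--Feighn--Handel); (2) a UPG, linearly growing outer automorphism is represented by an efficient Dehn twist on some graph of groups (Cohen--Lustig, via relative train track/eigengraph methods); and (3) if a power of $\Phi$ is represented by a Dehn twist, then $\Phi$ itself can be realised as a root of that Dehn twist on the same graph of groups, which is the content of \cite[Proposition 5.3]{KrsticLustigVogtmann}. The paper does not reprove any of this; it quotes the combination (with the assembly discussed in \cite[Theorem~2.4.6]{Andrew2022}). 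None of the machinery in your sketch --- the map $\mu$ to vertex group outer automorphism groups, the equivariant McCool groups, Proposition~\ref{extendhoms}, or the finite-index image argument --- is relevant to, or capable of producing, the required Dehn twist representative; those tools presuppose that the invariant graph of groups already exists.
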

	
\begin{rem}
	This statement is a combination of several results: that all polynomially growing elements of $\Out(F_n)$ have a power which is \emph{UPG}; that UPG and linearly growing elements of $\Out(F_n)$ are represented by Dehn twists, and that outer automorphisms having a power represented by a Dehn twist can be realised by a root of a Dehn Twist {\cite[Proposition 5.3]{KrsticLustigVogtmann}}. These results are discussed in detail in \cite[Theorem~2.4.6]{Andrew2022}.

\end{rem}
	
Note that the graph of groups automorphism realising $\Phi$ might only have a power representing the Dehn Twist; in fact this must be the case if it involves automorphisms of a vertex group which are finite order as outer automorphisms but not as automorphisms.

\subsection*{Invariant Trees}

Dual to a graph of groups is the Bass--Serre tree. All the actions on trees we consider will be \emph{minimal} -- admitting no proper invariant subtree -- and \emph{irreducible}, acting without fixing an end on a tree that is not a line (equivalently, containing a non-abelian free group acting freely).

Given a group, $G$, acting on a tree, $T$, we can define the translation length function, $\|.\|_T: G \to \Z$, given by $\|g\|_T:= \inf_{x \in T} d_T(x, xg)$. For minimal and irreducible actions,
this function determines both the tree $T$ and the action of $G$ on it (see \cite[Theorem 7.13(b)]{AlperinBass1984} and \cite[Theorem 3.7]{CullerMorgan1987Rtrees}). . 

Note that the translation length function is constant on conjugacy classes, thus the following definitions make sense.

\begin{defn}
	 Suppose we have a group $G$ acting without inversions on a tree $T$. We set %
	 \begin{itemize}
	 	\item $\Out^T(G) = \{ \Phi \in \Out(G) \ : \ \|g\Phi \|_T = \|g\|_T \text{ for all } g \in G    \}$, 
	 	\item $\Aut^T(G)$ to be the full pre-image of $\Out^T(G)$ in $\Aut(G)$. 
	 	
	 	That is,   $\Aut^T(G) = \{ \phi \in \Aut(G) \ : \ \|g\phi \|_T = \|g\|_T \text{ for all } g \in G    \}$. 
	 \end{itemize}
\end{defn}
	
These automorphisms can be studied through the action on the invariant tree (see for instance \cite{Naomi2021} for a detailed discussion of how to derive this from the literature):
	
\begin{prop}
	Let $G$ act on a tree, $T$. Then $\Aut^T(G)$ also acts on $T$, and extends the $G$-action, where each element of $G$ is identified with the inner automorphism it induces. 
\end{prop}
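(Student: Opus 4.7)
The plan is to use the reconstruction theorem from Alperin--Bass and Culler--Morgan already cited in the excerpt: for an irreducible, minimal action, the translation length function determines the tree and the $G$-action up to $G$-equivariant isometry. Given $\phi \in \Aut^T(G)$, I would form the twisted action $\rho_\phi$ obtained by precomposition with $\phi$, i.e.\ $\rho_\phi(g) = \rho(g\phi)$, where $\rho$ denotes the original action. The defining property $\|g\phi\|_T = \|g\|_T$ says precisely that $\rho_\phi$ and $\rho$ have the same translation length function, so the reconstruction theorem produces an isometry $H_\phi : T \to T$ intertwining them: $H_\phi(g \cdot x) = (g\phi) \cdot H_\phi(x)$ for all $g \in G$ and $x \in T$. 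I would then define the action of $\phi$ on $T$ to be $H_\phi$.

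The next step is to check that $H_\phi$ is uniquely determined, so that this is well defined. If both $H_\phi$ and $H'_\phi$ intertwine $\rho$ with $\rho_\phi$, then $H_\phi \circ (H'_\phi)^{-1}$ centralises $\rho(G)$ in $\Isom(T)$. Standard facts about irreducible actions (any isometry commuting with two hyperbolic elements having distinct axes must fix both axes pointwise, hence act trivially) give $H_\phi = H'_\phi$. With uniqueness in hand, the assignment $\phi \mapsto H_\phi$ is automatically a homomorphism: both $H_{\phi\psi}$ and the appropriate composition of $H_\phi$ and $H_\psi$ satisfy the intertwining relation characterising an equivariant isometry between $\rho$ and $\rho_{\phi\psi}$, so the two must coincide.

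To see that this action extends the $G$-action via $g \mapsto \iota_g$, observe that for an inner automorphism $\iota_g$ the isometry $\rho(g)$ itself intertwines $\rho$ with $\rho_{\iota_g}$: this is just a direct calculation using $\rho(h) \rho(g) = \rho(g) \rho(g^{-1}hg)$. Uniqueness then forces $H_{\iota_g} = \rho(g)$, which is the required compatibility.

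The substantive ingredient in all of this is the combination of the Alperin--Bass/Culler--Morgan reconstruction theorem with the triviality of the centraliser of an irreducible action; everything else is a formal argument driven by uniqueness. The main obstacle, more notational than mathematical, is keeping the conventions consistent (right versus left actions, direction of composition and conjugation) so that the homomorphism property and the compatibility with inner automorphisms come out with the correct signs; I would fix these conventions at the outset and verify the intertwining relation on both sides of each identity before invoking uniqueness.
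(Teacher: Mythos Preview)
Your approach is correct and is precisely the standard argument the paper is gesturing at: the proposition is stated without proof, citing only the Alperin--Bass/Culler--Morgan reconstruction theorems and a reference for how to assemble them, which is exactly what you do. Your sketch of uniqueness (via triviality of the centraliser of an irreducible minimal action) and the resulting homomorphism property are the expected details; just be careful, as you note, to fix the right-action conventions the paper uses so that $\phi \mapsto H_\phi$ comes out as a right action and $H_{\Ad(g)}$ matches $x \mapsto x \cdot g$.
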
	
	
Another perspective comes from the work of Bass and Jiang:

\begin{prop}[{\cite[Theorem 4.1]{BassJiang}}]
	\label{prop:gog_auts_act}
	Suppose $G$ acts on a tree $T$ with quotient graph of groups $\GG$. An outer automorphism $\Psi$ of $G$ is contained in $\Out^T(G)$ if and only if it has a representative which can be realised as an automorphism of $\GG$.
\end{prop}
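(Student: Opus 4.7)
The plan is to prove both directions of this iff statement by passing between the tree and the graph of groups via the Bass--Serre correspondence, using the preceding proposition that $\Aut^T(G)$ acts on $T$ extending the $G$-action.

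For the easy direction ($\Leftarrow$), suppose $\psi \in \Aut(G)$ is a representative of $\Psi$ that arises from an automorphism $F$ of $\GG$ as described in the preceding subsection. I would invoke the fact (already noted in the excerpt) that such an $F$ induces an automorphism $\hat{F}$ of the Bass--Serre tree $T$ preserving $G$-orbits. The key observation is that $\hat{F}$ is $\psi$-equivariant, meaning $\hat{F}(g \cdot x) = \psi(g) \cdot \hat{F}(x)$, which follows by unravelling the definition on generators of $P(\GG)$ and recalling that $T$ is built from $P(\GG)$ data. Since $\hat{F}$ is a tree isometry, the translation length identity $d(\hat{F}(x), \psi(g)\hat{F}(x)) = d(x, gx)$ forces $\|\psi(g)\|_T = \|g\|_T$, i.e.\ $\Psi \in \Out^T(G)$.

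For the harder direction ($\Rightarrow$), given $\Psi \in \Out^T(G)$, I would first lift to a representative $\psi \in \Aut^T(G)$ and use the preceding proposition to get an associated tree map $\hat{\psi} : T \to T$ which is a $\psi$-equivariant isometry of $T$. (Alternatively, one can construct $\hat{\psi}$ directly from scratch: the twisted $G$-action $g \cdot_\psi x := \psi(g) \cdot x$ has the same translation length function as the original, and minimal irreducible actions are determined by their length functions by the cited results of Alperin--Bass and Culler--Morgan, producing the equivariant isomorphism.) Since $\hat{\psi}$ permutes $G$-orbits, it descends to a graph automorphism $F_\Gamma$ of the quotient graph $\GG$.

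The main work, and the main obstacle, is then to upgrade this $F_\Gamma$ to a full graph-of-groups automorphism $F$ representing $\Psi$, by carefully producing the group isomorphisms $f_v, f_e$ and the edge elements $\delta_e$. For each chosen vertex representative $v \in \GG \subset T$, the point $\hat{\psi}(v)$ lies in the $G$-orbit of the representative $F_\Gamma(v)$, so $\hat{\psi}(v) = h_v \cdot F_\Gamma(v)$ for some $h_v \in G$; then $h \mapsto h_v^{-1} \psi(h) h_v$ sends the stabiliser $G_v$ to $G_{F_\Gamma(v)}$, defining $f_v$. An analogous construction at edges yields $f_e$, and comparing $h_{\iota(e)}$, $h_{\tau(e)}$ with the elements $g_e^{\pm}$ chosen in Definition~\ref{def:quotient_gog} produces the twisting elements $\delta_e$; the compatibility condition $f_{\tau(e)}(\alpha_e(g)) = \delta_e^{-1} \alpha_{F_\Gamma(e)}(f_e(g)) \delta_e$ will follow by applying $\hat{\psi}$ to the incidence of $e$ at $\tau(e)$ in $T$ and chasing group elements. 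Finally, by construction the induced automorphism of $P(\GG)$ restricted to $\pi_1(\GG, v) \cong G$ (possibly after conjugating by a path if $F_\Gamma(v) \neq v$, which is permitted since we only need to represent the outer class $\Psi$) agrees with $\psi$, completing the proof.
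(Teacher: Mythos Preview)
The paper does not give its own proof of this proposition: it is stated with a citation to \cite[Theorem~4.1]{BassJiang} and used as a black box. So there is nothing in the paper to compare your argument against.

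That said, your outline is the standard one and is essentially correct. For the $(\Leftarrow)$ direction you are right that the paper already asserts that a graph-of-groups automorphism induces a tree automorphism preserving $G$-orbits; the $\psi$-equivariance is what makes the translation length argument go through, and this is indeed immediate from how the induced map on $T$ is constructed from the path-group automorphism. For the $(\Rightarrow)$ direction your plan---use the $\Aut^T(G)$-action on $T$ (or the length-function rigidity of Alperin--Bass and Culler--Morgan) to obtain a $\psi$-equivariant isometry $\hat{\psi}$, descend to $F_\Gamma$, then rebuild $f_v$, $f_e$, $\delta_e$ by choosing translates $h_v$, $h_e$ carrying $\hat{\psi}(v)$, $\hat{\psi}(e)$ back to the chosen orbit representatives---is exactly how Bass--Jiang proceed. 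The only place where care is needed is in writing down $\delta_e$ and verifying the compatibility relation $f_{\tau(e)}(\alpha_e(g)) = \delta_e^{-1}\alpha_{F_\Gamma(e)}(f_e(g))\delta_e$: you correctly identify that this comes from comparing the translates at the two ends of $e$ with the elements $g_e^{\pm}$ of Definition~\ref{def:quotient_gog}, but in a complete write-up this computation would need to be spelled out (it is a few lines, not automatic). The final step, conjugating by a path to return the basepoint and land in the correct outer class, is handled as you say.
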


We will use both perspectives -- automorphisms acting on a tree, and automorphisms of the quotient graph of group -- throughout. 

\subsection*{Fibre and Coset Preserving Automorphisms}

Here we prove the elementary lemma which allows us to relate centralisers of outer automorphisms to automorphisms of the mapping torus.


\FibrePreserving*

\begin{proof}
	If the map above defines an endomorphism of $M_{\phi}$, then it is straightforward to check it will be bijective (and the inverse will be given by the map $t \mapsto t (g^{-1} \chi^{-1}), w \mapsto w \chi^{-1}$). Hence it is sufficient to check that the map above defines an endomorphism.

	Therefore, by an application of Von Dyck's Theorem \cite[Theorem 2.2.1]{RobinsonTextbook}, the map given above defines an automorphism of $M_{\phi}$  if and only if for all $w \in G$ we have that:
	$$
	(w \chi)^{tg} = w \phi \chi,
	$$
	
	which is equivalent to 
	%
	%
	%
	$$
	\begin{array}{rcll}
		&(w \chi \phi)^g & = & w \phi \chi  \\
		\Leftrightarrow \quad & \chi \phi \Ad(g) & =&  \phi \chi  \\
		\Leftrightarrow \quad & \Ad(g)  & = & \phi^{-1} \chi^{-1} \phi \chi, 
	\end{array}
	$$
	
	where $\Ad(g)$ denotes the inner automorphism defined by $g$. 
	
	Hence the map is an automorphism of $M_{\phi}$ if and only if  $[\phi, \chi] = \Ad(g) \in \Inn(G) $. 
\end{proof}

\begin{rem}
	We note that in the case that $G$ has a centre, the element $g$ in Lemma~\ref{prop:centraliser_fibre_preserving} is not unique; it is only well defined up to a coset of the centre. However, Lemma~\ref{prop:centraliser_fibre_preserving} works with any and all of these choices.
\end{rem}

Here it can help to make the following definition: 

\begin{defn}
	\label{def:coset_fibre_preserving}
	Let $M$ be a group, and $G$ be a normal subgroup of $M$ -- for instance, when $M$ is a mapping torus of $G$. An element $\chi$ of $\Aut(M)$ is \emph{fibre and coset preserving for $G$} if it preserves every coset of $G$ in $M$. In particular, such a $\chi$ restricts to an automorphism of $G$. 
\end{defn}

\begin{rem}
	Lemma~\ref{prop:centraliser_fibre_preserving} can then be restated as saying that $\chi$ commutes with $\phi$ as an outer automorphism if and only if it extends to a fibre and coset preserving automorphism of $M_{\phi}$ -- we use this equivalence repeatedly.
\end{rem}

\subsection*{Equivariant McCool Groups} 
An important input to our result is the study of \emph{equivariant McCool groups}, carried out by Bestvina, Feighn and Handel \cite{BestvinaFeighnHandel2023McCoolGroups} for finite order elements of $\Out(F_n)$. Here we recall the definition of these subgroups, as well as their result.

\begin{defn}
	\label{def:mccool}
	Let $G$ be a group, let $\{ G_i \}$ be a finite family of subgroups of $G$ and $\Phi$ an outer automorphism of $G$. 
	
	Then, the $\Phi$-equivarant McCool group relative to the $G_i$ (or simply the equivariant McCool group) $\MC(G;\{ G_i \} ; \Phi)$ consists of those outer automorphisms $\Psi$ of $G$ such that: 
	\begin{enumerate}[(i)]
		\item For each $G_i$, there is a $\psi_i \in \Psi$ (a genuine automorphism in the outer class of $\Psi$) such that;
		\begin{itemize}
			\item $\psi_i(G_i)= G_i$, 
			\item $\psi_i|_{G_i}$ is the identity map
		\end{itemize}
		\item $\Psi$ commutes with $\Phi$. 
	\end{enumerate}
\end{defn}

	One can also define the generalised McCool group by 
	dropping the condition that $\psi_i$ restricts to the identity map on $G_i$. This is the point of view in \cite{BestvinaFeighnHandel2023McCoolGroups}. 
	
	The `usual' McCool groups -- without equivariance -- can be recovered by putting $\Phi$ equal to the identity.
	%

The theorem we need is:

\begin{thm}[{\cite[Theorem 1.2]{BestvinaFeighnHandel2023McCoolGroups}}]
	\label{mccool}
	Let $F$ be a finitely generated free group, let $\{ G_i\}$ be a finite family of finitely generated subgroups and let $\Phi \in \Out(F)$ be a finite order outer automorphism. Then  $\MC(F ;\{ G_i \} ; \Phi)$ is of type VF. 
\end{thm}
\begin{rem}
	In fact, the Theorem in \cite{BestvinaFeighnHandel2023McCoolGroups} is stated for generalised McCool groups, but it follows that the same is true for McCool groups. This is because Corollary 1.6 of \cite{Guirardel2015} states that any McCool group (in a free group, or even a toral relatively hyperbolic group) is equal to the McCool group of a finite family of cyclic groups. And since cyclic groups have finite automorphism group, this is commensurate with the generalised McCool group of the same collection of cyclic groups. 
	
	The equivariant McCool groups are then simply intersections with a centraliser. 
\end{rem}


\section{Proof of the theorem} 

%

\subsection*{Notation and Actions on Invariant Trees}

We set the following notation throughout: 
\begin{Not} \

\label{notationcent}	
	
	\begin{itemize}
	\item $F_n$ is a free group of rank $n$, 
	\item $\Phi \in \Out(F_n)$ is a fixed linearly growing outer automorphism,
	\item $C(\Phi)$ is the centraliser of $\Phi$ in $\Out(F_n)$, 
	\item $\widehat{C}(\Phi)$ is the full pre-image of $C(\Phi)$ in $\Aut(F_n)$. 
	\item Set $G=F_n$ and $M=G \rtimes_{\Phi} \Z$, the corresponding mapping torus with monodromy $\Phi$. (Note that the isomorphism type does not depend on the chosen representative $\varphi$ of $\Phi$)
	\item By abuse of notation, we will write $G \leq M \leq \widehat{C}(\Phi)$, thinking of $M$ as the full pre-image of $\langle \Phi \rangle$ in $\Aut(F_n)$ and identifying $F_n$ with $\Inn(F_n)$.
	\end{itemize}
\end{Not}		

Just as in \cite{CentralisersDehnTwists}, our proof relies on a theorem of Cohen and Lustig:

\begin{thm}[{\cite[Proposition 7.1(a)]{Cohen1999}}]
	\label{thm:CohenLustigConjugacy}
	Suppose $\Psi \in \Out(F_n)$ is represented by an efficient Dehn twist, based on $\GG$ with twistors $\{z_e\}$. Then the centraliser $C_{\Out(F_n)}(\Psi)$ consists of outer automorphisms induced by graph of groups automorphisms of $\GG$ which preserve twistors.
\end{thm}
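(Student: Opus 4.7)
The $(\Leftarrow)$ direction is essentially bookkeeping in the path group. A graph-of-groups automorphism $F$ of $\GG$ conjugates the Dehn twist $D$ with twistor system $\{z_e\}$ to a graph-of-groups automorphism which acts on each edge $e$ by left-multiplication by a single vertex-group element, so the conjugate is itself a Dehn twist on $\GG$ whose twistor at edge $e$ is the image of $z_{F_\Gamma^{-1}(e)}$ under the appropriate component of $F$. If $F$ \emph{preserves twistors}, these new twistors coincide with the old ones, $FDF^{-1}=D$, and the induced outer automorphism $\Theta$ commutes with $\Psi$.

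For the $(\Rightarrow)$ direction, let $T$ be the Bass--Serre tree of $\GG$. Dehn twists preserve translation length, so $\Psi \in \Out^T(F_n)$. The substantive step, given $\Theta \in C_{\Out(F_n)}(\Psi)$, is to show $\Theta \in \Out^T(F_n)$: once this is established, Proposition \ref{prop:gog_auts_act} supplies a representative of $\Theta$ realised as a graph-of-groups automorphism $F$ of $\GG$, and running the forward computation in reverse shows $F$ must preserve twistors (both $FDF^{-1}$ and $D$ are efficient Dehn twists on the same $\GG$ representing $\Psi$, and the twistors of an efficient Dehn twist on a fixed $\GG$ are determined by the outer class they induce).

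To show $\Theta \in \Out^T(F_n)$, consider the ``twisted'' $F_n$-tree $T^\Theta$, with the same underlying tree as $T$ but with $F_n$-action precomposed with a lift of $\Theta$, so that $\|g\|_{T^\Theta}=\|g\Theta\|_T$ for every $g \in F_n$. Since $\Theta$ commutes with $\Psi$ in $\Out(F_n)$ and translation length is a conjugacy invariant, $(g\Psi)\Theta = (g\Theta)\Psi$ as conjugacy classes, so $\|\cdot\|_{T^\Theta}$ is itself $\Psi$-invariant:
\[\|g\Psi\|_{T^\Theta} = \|(g\Psi)\Theta\|_T = \|(g\Theta)\Psi\|_T = \|g\Theta\|_T = \|g\|_{T^\Theta}.\]
Hence $T^\Theta$ is another minimal, irreducible $F_n$-tree on which $\Psi$ acts length-preservingly and on which $\Psi$ still admits an efficient Dehn twist representative. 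The canonicity of the efficient Dehn twist tree --- the assertion that the underlying tree is determined up to equivariant isometry by the outer class of $\Psi$ --- then forces $T^\Theta$ to be equivariantly isometric to $T$, giving $\|g\Theta\|_T=\|g\|_T$ for all $g$, i.e.\ $\Theta \in \Out^T(F_n)$.

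The main obstacle is this canonicity step. It is the content of ``efficient'' that the underlying tree is determined by the outer class of the Dehn twist, but extracting the precise form needed here --- uniqueness among minimal, irreducible $F_n$-trees on which $\Psi$ acts length-preservingly and is representable by a Dehn twist --- requires a careful reading of the normal-form and rigidity results of Cohen--Lustig \cite{Cohen1999} (with input from \cite{KrsticLustigVogtmann}). Everything else reduces to bookkeeping in $P(\GG)$.
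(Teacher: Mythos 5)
The paper does not prove this statement at all: it is imported verbatim from \cite[Proposition 7.1(a)]{Cohen1999}, so there is no internal argument to measure you against. On its own terms, your outline reproduces the structure of Cohen--Lustig's actual proof. The $(\Leftarrow)$ direction is indeed bookkeeping (with the small caveat that ``$FDF^{-1}=D$'' should be asserted at the level of twistor systems, hence of induced outer automorphisms, rather than as an equality of graph-of-groups automorphisms, where inner adjustments can intervene), and the $(\Rightarrow)$ direction is correctly reduced, via the twisted tree $T^\Theta$ and Proposition~\ref{prop:gog_auts_act}, to the uniqueness of the efficient Dehn twist representative of $\Psi$. Your final step also needs its own (true) citation: that the twistors of an efficient Dehn twist on a fixed $\GG$ determine and are determined by the induced outer class follows from \cite[Proposition 5.4]{Cohen1999}, i.e.\ the injectivity of the map from the product of the edge-group centres onto the Dehn twist subgroup, and one should note that the conjugated twist is again \emph{efficient} on the same marked $\GG$ (immediate, since its twistors are isomorphic images of the old ones).

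The one point to be honest about is the status of the ``canonicity'' step. It is not ``the content of `efficient''' in any definitional sense: the assertion that the marked graph of groups (equivalently the $F_n$-tree, up to equivariant isometry) underlying an efficient Dehn twist representative depends only on the outer class is the main rigidity theorem of \cite{Cohen1999}, resting on their parabolic orbits theorem (with the extension to roots in \cite{KrsticLustigVogtmann}), and it is where essentially all of the difficulty of Proposition 7.1(a) lives. So, read as a self-contained proof, your proposal has a genuine gap exactly where you flag it --- what you establish is the comparatively formal reduction, while the substantive input is assumed. Read instead as a reduction to Cohen--Lustig's uniqueness theorem, it is sound, and it mirrors how they themselves derive the proposition; since the paper under review likewise treats the whole statement as a black box, that is a reasonable place to stop, provided the dependence is stated as a citation rather than as a consequence of the definition of efficiency.
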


Any element centralising $\Phi$ will centralise any power of $\Phi$, so any element of $C(\Phi)$ can be realised as a graph of groups automorphism of $\GG$ which preserves twistors. The following proposition is largely a precise statement of some of the consequences of this theorem.

\begin{prop}
	\label{invarianttree}
	Given a linear growth outer automorphism $\Phi$ there is a minimal, co-compact $G=F_n$ tree $T$ with $\widehat{C}(\Phi) \leq \Aut^T(F_n)$. In particular, $\widehat{C}(\Phi)$ acts on $T$ -- perhaps inverting some edges -- and this action is compatible with the inclusions $G \leq M \leq \widehat{C}(\Phi)$. After subdividing as necessary,  the $G$- and $M$-stabilisers of this action have the following properties:
	\begin{itemize}
		\item Each $G$-vertex stabiliser $G_v$ is a finitely generated free group of rank at least 2, unless it is a subdivision vertex, in which case it is maximal infinite cyclic,
		\item Each $M$-vertex stabiliser $M_v$ is virtually $G_v \times \Z$,
		\item Each $G$-edge stabiliser $G_e$ is maximal infinite cyclic, and in particular malnormal.
		\item Each $M$-edge stabiliser $M_e$ is $G_e \rtimes \Z$ and hence is either free abelian of rank 2, or the Klein bottle group. 
	\end{itemize}
	In fact, each vertex stabiliser $M_v$ may be written as $G_v \rtimes_{\Phi^{t(v)} \mid_{G_v}} \Z$, where $t(v)$ is the minimal positive power such that $\Phi$ has a representative preserving $G_v$ (equivalently, every representative preserves the $G$-orbit of $v$, and some representative stabilises it), and the restriction to $G_v$ of any such representative induces a finite order outer automorphism $\Phi^{t(v)} \mid_{G_v}$.

\end{prop}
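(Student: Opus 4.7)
The plan is to take $T$ to be the Bass--Serre tree of the graph of groups $\GG$ supplied by Theorem~\ref{lineartodehn}, on which $\Phi$ is realised as a root of an efficient Dehn twist $D$ (so $\Phi^k = D$ in $\Out(F_n)$ for some $k$). Minimality and cocompactness of the $G = F_n$ action then come directly from the definition of \emph{efficient}: the graph $\Gamma$ is finite and has no unused edges. The listed $G$-stabiliser properties are also built into efficiency: vertex groups are free of rank at least $2$, edge groups are maximal infinite cyclic, and maximal cyclic subgroups of a free group are automatically malnormal.

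The containment $\widehat{C}(\Phi) \leq \Aut^T(F_n)$ will be assembled from the results already quoted in the preliminaries. Given $\Psi \in C(\Phi)$, it commutes with $\Phi^k$, which is represented by $D$. Theorem~\ref{thm:CohenLustigConjugacy} then yields a representative of $\Psi$ realised as a (twistor-preserving) graph of groups automorphism of $\GG$, and Proposition~\ref{prop:gog_auts_act} converts this into $\Psi \in \Out^T(F_n)$. Passing to preimages in $\Aut(F_n)$ gives the desired inclusion, and hence an action of $\widehat{C}(\Phi)$ on $T$; compatibility with the chain $G \leq M \leq \widehat{C}(\Phi)$ is immediate once we identify $G$ with $\Inn(F_n)$ (after perhaps subdividing once to avoid any edge inversions coming from $\Phi$).

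To analyse $M$-stabilisers I would work with the action of the $\Z$-generator of $M = G \rtimes_\Phi \Z$ on the finite quotient graph $\Gamma = G \backslash T$: it acts by the graph automorphism $R_\Gamma$ induced by the realisation of $\Phi$, and since $R_\Gamma^k$ is trivial (as $D$ has the trivial graph map), $R_\Gamma$ has finite order. For $v \in T$ let $t(v)$ be the least positive integer such that $R_\Gamma^{t(v)}$ fixes the image of $v$ in $\Gamma$; this is exactly the least power of $\Phi$ admitting a representative stabilising $v$. The short exact sequence $1 \to G_v \to M_v \to t(v)\Z \to 1$ then splits by lifting a generator, yielding $M_v \cong G_v \rtimes_{\Phi^{t(v)}\mid_{G_v}} \Z$. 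The same analysis at an edge $e$ gives $M_e \cong G_e \rtimes \Z \cong \Z \rtimes \Z$; since $\Aut(\Z) \cong \Z/2$, this is either $\Z^2$ or the Klein bottle group.

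The main technical point -- and the step I expect to be the hardest -- is justifying that $\Phi^{t(v)}\mid_{G_v}$ is a well-defined, finite-order element of $\Out(G_v)$. Since $G_v$ is non-abelian and every incident edge group is cyclic, $G_v$ has a unique fixed point in $T$; a standard tree argument (if $g \in N_{F_n}(G_v)$ then $G_v$ also fixes $g\cdot v$, forcing $g \in G_v$) then gives $N_{F_n}(G_v) = G_v$, so any two representatives of $\Phi^{t(v)}$ preserving $G_v$ differ by an inner automorphism of $G_v$, and the restriction is well-defined in $\Out(G_v)$. For finite order, observe that $D$ admits a representative acting as the identity on $G_v$ (its vertex-group homomorphism is trivial), so $(\Phi^{t(v)})^k = D^{t(v)}$ has a representative whose restriction to $G_v$ is the identity; combined with the normaliser identity, this forces $(\Phi^{t(v)}\mid_{G_v})^k$ to be inner in $G_v$, i.e.\ trivial in $\Out(G_v)$. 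Most of the care in writing up will be in keeping the three viewpoints -- automorphisms of $\GG$, elements of $\Aut(F_n)$, and isometries of $T$ -- aligned through this last argument.
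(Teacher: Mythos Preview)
Your proposal is correct and follows essentially the same route as the paper: take $T$ to be the Bass--Serre tree of the graph of groups from Theorem~\ref{lineartodehn}, deduce $\widehat{C}(\Phi)\le\Aut^T(F_n)$ from Theorem~\ref{thm:CohenLustigConjugacy} and Proposition~\ref{prop:gog_auts_act}, read the $G$-stabiliser properties off the definition of efficiency, and use self-normalisation of $G_v$ together with the fact that a power of $\Phi$ restricts to an inner automorphism on each vertex group to get well-definedness and finite order of $\Phi^{t(v)}\!\mid_{G_v}$. The only difference is that where you argue the $M$-stabiliser structure directly via the split short exact sequence $1\to G_v\to M_v\to t(v)\Z\to 1$, the paper instead cites \cite[Proposition~2.6]{Dahmani2016}; your direct argument is perfectly adequate here.
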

\begin{proof}
	By Theorem~\ref{lineartodehn}, there is a graph of groups $\GG$ on which $\Phi$ is realised as a root of an efficient Dehn twist. The tree $T$ is the Bass--Serre tree for this graph of groups. Any element centralising $\Phi$ also centralises all powers $\Phi^k$. By considering the power which is a Dehn Twist, we see from Theorem~\ref{thm:CohenLustigConjugacy} that every element of $C(\Phi)$ can be realised as an automorphism of $\GG$. By Proposition~\ref{prop:gog_auts_act}, this means $C(\Phi)$ is contained in $\Out^T(G)$, and so $\widehat{C}(\Phi)$ is contained in $\Aut^T(F_n)$.
	
	The equivalence of the definitions of $t(v)$, as well as the fact that $\Phi^{t(v)} \mid_{G_v}$ is a well defined outer automorphism of $G_v$ follow from the fact every vertex group $G_v$ is equal to its own normaliser. This is implied by the observation that every edge group of $\GG$ is a proper subgroup of the adjacent vertex groups. (In particular, note that any two representatives of $\Phi$ that both preserve $G_v$ must differ by an inner automorphism induced by an element of $G_v$.)
	
	The statements about $G$-stabilisers follow from the definition of an efficient Dehn twist; the corresponding statements about $M$-stabilisers follow from~\cite[Proposition 2.6]{Dahmani2016}), 
	 which gives that $M_v \cong G_v \rtimes \langle \phi^{t(v)}g \rangle$. Since $\Phi$ is a root of a Dehn twist, there is some power $\Phi^k$ which is realised by a representation of a Dehn twist on $\GG$, and in particular restricts to an inner automorphism at every vertex group. The restriction $\Phi^{t(v)} \mid_{G_v}$ is a root of this, and so must be finite order as an outer automorphism.
\end{proof}


\begin{Not}
	Henceforth, we fix the notation $t(v)$ (for the minimal power of $\Phi$ preserving the vertex orbit) and $\Phi^{t(v)} \mid_{G_v}$ introduced in Proposition~\ref{invarianttree}.
\end{Not}

This allows us to view $C(\Phi)$ as a subgroup of those outer automorphisms of $F_n$ preserving the action on a tree encoded by $\GG$, which is described in \cite[Theorem 8.1]{BassJiang}. As observed in \cite[Theorem 2.11]{CentralisersDehnTwists}, when (as in our case) the graphs of groups have malnormal edge groups, this can be simplified.

\begin{thm}[{\cite[Theorem 8.1]{BassJiang}} and {\cite[Theorem 2.11]{CentralisersDehnTwists}}]
	\label{bassjiang}
	Suppose $\GG$ is a graph of groups corresponding to a minimal action on a tree, $T$, without fixed ends, and with underlying graph $\Gamma = T/G$. 
	
	Suppose further that the edge groups are malnormal as subgroups of $G=\pi_1(\GG,v_0)$. Let $\Out^T(G)$ be the subgroup of $\Out(G)$ preserving the length function of this action (equivalently, the splitting of $G$ indicated by $\GG$). 
	Then $\Out^T(G)$ has a filtration by normal subgroups, \[\Out^T(G) \trianglerighteq \Out_0^T(G) \trianglerighteq K \trianglerighteq 1.\] such that \begin{enumerate}
			\item $\Out^T(G)/\Out_0^T(G)$ is isomorphic to a subgroup of $\Aut(\Gamma)$;
			\item $\Out_0^T(G)/K$ is isomorphic to a subgroup of $\prod_{v \in V(\Gamma)} \Out(G_v)$;
			\item $K$ is the group of Dehn Twists of $\GG$.
		\end{enumerate}
	Moreover, if all inclusions of edge groups into vertex groups are proper, then $K$ is a direct product of the centres of the edge groups, $Z(G_e)$ (working over geometric edges). 
\end{thm}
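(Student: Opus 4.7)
The plan is to build the filtration layer by layer using the graph of groups representations guaranteed by Proposition~\ref{prop:gog_auts_act}: every element of $\Out^T(G)$ is represented by some graph of groups automorphism $F = (F_\Gamma, f_v, f_e, \delta_e)$ of $\GG$. The filtration will arise by asking successively which of these pieces of data can be made trivial, up to the ambiguity in the representation (composition with inner path group automorphisms, and the absorption of conjugations into the $\delta_e$).

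First, I would check that $F \mapsto F_\Gamma$ descends to a homomorphism $\Out^T(G) \to \Aut(\Gamma)$: two graph of groups automorphisms representing the same outer automorphism differ by an inner path group automorphism, which induces the trivial graph map. Defining $\Out_0^T(G)$ as the kernel gives quotient (1). For an element of $\Out_0^T(G)$, choose a representative with $F_\Gamma = \mathrm{id}$ and send it to $([f_v])_v \in \prod_v \Out(G_v)$; well-definedness follows since inner path group automorphisms with trivial graph part restrict to inner automorphisms on each vertex group. Defining $K$ as the kernel gives quotient (2).

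The main technical step, and where malnormality is essential, is identifying $K$ with the group of Dehn twists. Given a representative $F$ of an element of $K$ with $F_\Gamma = \mathrm{id}$ and each $f_v$ inner (say conjugation by $h_v \in G_v$), I would absorb the $h_v$ into the edge data via composition with inner path group automorphisms, obtaining a new representative with every $f_v = \mathrm{id}$. The defining compatibility $f_{\tau(e)}(\alpha_e(g)) = \delta_e^{-1}\alpha_e(f_e(g))\delta_e$ then forces $\delta_e$ to centralise $\alpha_e(G_e)$ and $f_e = \mathrm{id}$. Malnormality of $\alpha_e(G_e)$ in $G_{\tau(e)}$ now forces $\delta_e \in \alpha_e(G_e)$; writing $\delta_e = \alpha_e(g_e)$, the same centralising condition gives $g_e \in Z(G_e)$. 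Thus the adjusted $F$ is a Dehn twist.

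For the moreover statement, the twistors $z_e = \delta_e \delta_{\overline{e}}^{-1}$ assemble into a homomorphism from $K$ to $\prod_e Z(G_e)$ taken over geometric edges; surjectivity onto the appropriate subgroup is by construction, and injectivity uses the proper inclusion hypothesis to force a Dehn twist with trivial twistors to induce the identity on $\pi_1(\GG, v_0)$. I expect the main obstacle to be bookkeeping: tracking precisely the ambiguity between an outer automorphism and its graph of groups realisation (including the interplay between inner path group automorphisms and the freedom to rescale the $\delta_e$), so that the three maps in the filtration are unambiguously defined. The malnormality step is where the argument stops being bookkeeping and becomes rigid, collapsing the potentially large kernel to exactly the Dehn twists.
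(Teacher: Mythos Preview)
The paper does not give its own proof of this theorem: it is quoted from \cite[Theorem~8.1]{BassJiang} and \cite[Theorem~2.11]{CentralisersDehnTwists}, with only the parenthetical remark after the statement that the final ``moreover'' follows from \cite[Proposition~5.4]{Cohen1999}. So there is no argument in the paper to compare your proposal against.

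That said, your outline is the standard route to this result and is essentially correct. One step is slightly misstated. After normalising to $f_v=\mathrm{id}$, the compatibility relation
\[
\alpha_e(g)=\delta_e^{-1}\,\alpha_e(f_e(g))\,\delta_e
\]
only shows that $\delta_e$ \emph{normalises} $\alpha_e(G_e)$, not that it centralises it; malnormality then gives $\delta_e=\alpha_e(d_e)$ for some $d_e\in G_e$, and the relation becomes $f_e=\Ad(d_e)$, so $f_e$ is inner rather than the identity. To see that the resulting automorphism is a Dehn twist in the paper's sense (all $f_e$ trivial and $g_e\in Z(G_e)$), use $f_e=f_{\overline e}$ to get $\Ad(d_e)=\Ad(d_{\overline e})$, hence $d_{\overline e}^{-1}d_e\in Z(G_e)$; since the induced outer automorphism of $\pi_1(\GG,v_0)$ depends only on this central twistor, it can be re-represented as a genuine Dehn twist. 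With that correction the identification of $K$ goes through as you describe.
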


(The last observation follows from \cite[Proposition 5.4]{Cohen1999}.)
%

We call the map from $\Out_0^T(G)$ to $\prod_{v \in V(\Gamma)} \Out(G_v)$, with kernel $K$, $\mu$ and refer to it frequently through our arguments. We describe its construction here, for future reference. From the perspective of a graph of groups automorphism, acting as the identity on the underlying graph, one can write down automorphisms of each vertex group; the fact that this map is given to the \emph{outer} automorphism groups is a consequence of the failure of uniqueness among representations of a given (outer) automorphism on the graph of groups.

However, we prefer to take a definition from the perspective of the $\Aut^T(G)$ action on $T$. Let $\Aut_0^T(G)$ be the full preimage of $\Out_0^T(G)$, and observe that this subgroup will preserve the $G$-orbits (equivalently, it has the same quotient graph as the original $G$-action). Consider a $\chi \in \Aut_0^T(G)$, and a vertex $v$ of $T$. There exists a $y \in G$ such that $v \cdot \chi  = v \cdot y$.

Note that for any $g$, we have that $\Ad(g) \chi = \chi \Ad(g \chi)$. If we restrict to those $g \in G_v$, and note that both $g$ and $\Ad(g)$ act in the same way on $T$, we get that,  
$$
v= v \cdot g \cdot \chi \cdot y^{-1} = v \cdot \chi \cdot (g \chi) \cdot y^{-1} = v \cdot y (g \chi) y^{-1} = v \cdot (g \chi)^{y^{-1}}. 
$$

Hence if $g \in G_v$, then $(g \chi)^{y^{-1}}$ is also in $G_v$.  
 
\begin{defn}
	\label{def:mu}
Given $\chi$ and $y$ as above, 	$ \hat{\mu}_v (\chi)$ is the automorphism of $G_v$ defined by sending $g \in G_v$ to $(g \chi)^{y^{-1}}$.

 This is an automorphism of $G_v$, since it is a restriction of an automorphism of $G$ preserving this subgroup. Note that there is a choice of elements $y$, corresponding to the stabiliser $G_v$, so this map is only well-defined up to inner automorphisms of $G_v$, giving an outer automorphism of $G_v$.
	
	Moreover, since $\chi$ also preserves edge-orbits, we know that for every edge $e$ whose initial vertex is $v$, we have a $g_e \in G_v$ such that	
	$$
	e \cdot \chi \cdot y^{-1} = e \cdot g_e.  
	$$
	
	Hence repeating the argument above for edge groups we deduce that the automorphism $ \hat{\mu}_v (\chi)$ preserves the $G_v$-conjugacy classes of the incident edge groups $G_e$.

	To define a map ${\mu}_v (\chi)$, from $\Out_0^T(G)$ to $\Out(G_v)$, observe that $\Inn(G)$ lies in the kernel of $\hat{\mu}_v$ (since the action of the inner automorphisms matches that of the corresponding group elements), which therefore factors through $\Out_0^T(G)$. The map $\mu$ is then constructed by assembling one $\mu_v$ for a representative of each vertex orbit under the $G$ (or $\Aut_0^T(G)$) action on $T$.
\end{defn} 

This leads to the following consequence of Rodenhausen--Wade's work on Dehn twists \cite[Lemma 3.6]{CentralisersDehnTwists}: 

\begin{prop}
	\label{prop:prod_of_normal_McCool}
	The image of $\mu$ is contained in the product \[ \prod_{[v] \in V(T)/G} \MC(G_v;\{G_e\}_{\iota(e)=v}).\]
\end{prop}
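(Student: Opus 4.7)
The plan is to show that each $\mu_v(\chi)$ lies in the McCool group $\MC(G_v;\{G_e\}_{\iota(e)=v})$ by producing, for each incident edge $e$, a representative of $\mu_v(\chi)$ whose restriction to $G_e$ is the identity. The definition of the McCool group permits a distinct representative for each subgroup, so this will suffice. The key tool is that $\chi \in \widehat{C}(\Phi)$ centralises the Dehn twist power $\Phi^k$ realised on $\GG$ (from Theorem~\ref{lineartodehn}), so Theorem~\ref{thm:CohenLustigConjugacy} of Cohen--Lustig provides a realisation of $\chi$ as a graph of groups automorphism preserving twistors.

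First I would take such a graph of groups automorphism $F = (F_\Gamma, \{f_v\}, \{f_e\}, \{\delta_e\})$ of $\GG$ representing $\chi$, so that $f_e(z_e) = z_{F_\Gamma(e)}$ for every edge. Since $[\chi] \in \Out_0^T(G)$ and the induced action on $\Gamma = T/G$ depends only on the outer class, $F_\Gamma$ is the identity on $\Gamma$; the twistor-preserving condition collapses to $f_e(z_e)=z_e$. Efficiency of the Dehn twist ensures each $z_e$ is a non-trivial element of the maximal infinite cyclic edge group $G_e$ (Proposition~\ref{invarianttree}), so the only $\pm 1$ automorphism of $G_e$ fixing $z_e$ is the identity, giving $f_e = \mathrm{id}_{G_e}$ for every incident edge. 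Applying the edge relation for $\bar e$ (noting $\tau(\bar e) = v$) then yields $f_v(\alpha_{\bar e}(g)) = \delta_{\bar e}^{-1}\alpha_{\bar e}(g)\delta_{\bar e}$ for all $g \in G_e$, so the composition $\Ad(\delta_{\bar e}) \circ f_v$ lies in the outer class of $f_v$ and restricts to the identity on $\alpha_{\bar e}(G_e) = G_e \subset G_v$.

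To finish, I would identify $f_v$ as a representative of $\mu_v(\chi)$: since $F_\Gamma$ fixes $v$, the induced automorphism of $G = \pi_1(\GG,v)$ fixes the vertex $v$ of $T$, so in Definition~\ref{def:mu} one may take $y=1$ and obtain $\hat{\mu}_v(\chi) = f_v$ on $G_v$. Doing this for each incident edge (possibly using a different $F$ for each) produces the required representatives and establishes $\mu_v(\chi) \in \MC(G_v;\{G_e\}_{\iota(e)=v})$.

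The main obstacle is purely a matter of bookkeeping in Step~1, namely verifying that the graph of groups realisation supplied by Cohen--Lustig can be taken to have $F_\Gamma = \mathrm{id}$, and that the resulting vertex map $f_v$ genuinely represents the outer automorphism $\mu_v(\chi)$ defined via the action on $T$. Both points are handled by the observation that the action on $\Gamma$ is determined by the outer class, so any realisation of an element of $\Out_0^T(G)$ already has $F_\Gamma = \mathrm{id}$, after which the two constructions of the vertex-group representative agree up to the inner ambiguity already built into the definition of $\mu_v$.
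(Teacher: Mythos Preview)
Your argument is correct and more explicit than the paper's own justification, which simply points to the preceding discussion (Definition~\ref{def:mu}) and cites Rodenhausen--Wade. That discussion establishes, via the tree action, only that $\hat\mu_v(\chi)$ preserves the $G_v$-conjugacy classes of the incident edge groups---the \emph{generalised} McCool condition. To upgrade this to the genuine McCool condition (a representative restricting to the identity on each $G_e$) one needs something further, and your use of Cohen--Lustig's twistor preservation supplies exactly that: once $F_\Gamma=\mathrm{id}$ and $f_e(z_e)=z_e$ with $z_e$ nontrivial in the infinite cyclic $G_e$, you get $f_e=\mathrm{id}_{G_e}$, and the edge relation then exhibits the required inner correction at the vertex.

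Two small remarks. First, your argument applies only to $\chi$ whose outer class lies in $C_0(\Phi)$, since the twistor-preservation input comes from centralising the Dehn twist power of $\Phi$. The proposition as literally stated concerns the image of $\mu$ on all of $\Out_0^T(G)$; for a general element there one has no twistor preservation, and an automorphism inverting the generator of an edge group would obstruct membership in the (non-generalised) McCool group. However, the paper only ever uses this result for $C_0(\Phi)$ (in Proposition~\ref{projection}), so your restriction is exactly what is needed downstream. Second, you do not need a different realisation $F$ for each incident edge: a single $F$ with $F_\Gamma=\mathrm{id}$ and all $f_e=\mathrm{id}$ already yields, for each edge $e$ with $\iota(e)=v$, the representative $\Ad(\delta_{\bar e})\circ f_v$ in the outer class of $f_v$ fixing $\alpha_{\bar e}(G_e)$ pointwise.
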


The discussion above shows that the image in each $\Out(G_v)$ preserves the conjugacy classes of the incident $G_e$. The fact that the image lies in the McCool group is equivalent to asserting that the induced (outer) automorphisms on the $G_e$ are trivial, which follows from the `preserves twistors' statement of Theorem~\ref{thm:CohenLustigConjugacy}. Note that the efficiency of the Dehn Twist is vital for that result.

Note that this claim concerns only a product of standard McCool groups; equivariance is dealt with later on.

We fix the following notation for the rest of the paper.

\begin{Not}
	\label{mudefined}
	
	 As in Notation~\ref{notationcent}, we have a given linear growth outer automorphism of $F_n$, denoted $\Phi$, whose centraliser is denoted $C(\Phi)$ and whose pre-image in $\Aut(F_n)$ is denoted $\widehat{C}(\Phi)$.  
	
	\begin{enumerate}[(i)]
		\item We let $T$ be the $G$-tree provided by Proposition~\ref{invarianttree}. $T$ is also an $M$-tree and a $\widehat{C}(\Phi)$-tree, with all these actions compatible with the inclusions $G \leq M \leq \widehat{C}(\Phi)$.
		\item As usual, we write $G_v$ to mean the $G$-stabiliser of a vertex $v$ of $T$, and $G_e$ to be an edge stabiliser.  
		\item We let $C_0(\Phi)$ be the subgroup of $C(\Phi)$ which preserves the $G$-orbits of $T$. That is, $C_0(\Phi) = C(\Phi) \cap \Out^T_0(G)$. This is a finite index subgroup of $C(\Phi)$. 
		\item Similarly, $\widehat{C}_0(\Phi)$ is the full pre-image of  $C_0(\Phi)$ and is the finite index subgroup of $\widehat{C}(\Phi)$ which preserves the $G$-orbits of $T$. 
	\end{enumerate}
\end{Not}



Specialising to the situation of this paper, described in Notation~\ref{mudefined}, it follows from Theorem~\ref{bassjiang} that

\begin{lem}
	\label{lem:kernel_is_free_abelian}
	The kernel of $\mu$ is a finitely generated free abelian group. 
\end{lem}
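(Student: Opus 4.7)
The plan is to identify the kernel of $\mu$ directly with (a subgroup of) the Dehn twist subgroup $K$ appearing in Theorem~\ref{bassjiang}, and then observe that $K$ itself is finitely generated free abelian under the efficiency hypotheses we already have in hand.

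First I would recall that $\mu$ was constructed (see Definition~\ref{def:mu} and Proposition~\ref{prop:prod_of_normal_McCool}) as a homomorphism $\Out_0^T(G) \to \prod_{[v]} \Out(G_v)$, and that by Theorem~\ref{bassjiang} its kernel on all of $\Out_0^T(G)$ is exactly $K$, the group of Dehn twists of $\GG$. Restricting to $C_0(\Phi) = C(\Phi) \cap \Out_0^T(G)$, the kernel we are interested in is therefore $K \cap C_0(\Phi)$, which sits as a subgroup of $K$.

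Next I would invoke the ``moreover'' clause of Theorem~\ref{bassjiang}: whenever every edge group embeds properly into its adjacent vertex groups, $K$ is isomorphic to the direct product of the centres $Z(G_e)$ taken over geometric edges of $\GG$. In our setting $\GG$ is the efficient graph of groups provided by Theorem~\ref{lineartodehn}; by Proposition~\ref{invarianttree} each edge group $G_e$ is maximal infinite cyclic (so $Z(G_e) = G_e \cong \Z$), and each edge inclusion is proper. Since $\GG$ has only finitely many geometric edges, $K$ is free abelian of finite rank.

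Finally, any subgroup of a finitely generated free abelian group is again finitely generated free abelian, so $\ker \mu = K \cap C_0(\Phi)$ has the desired form. The only ``content'' is assembling the right pieces already established — there is no real obstacle here; the lemma is essentially a direct specialisation of Theorem~\ref{bassjiang} to the efficient Dehn twist setting guaranteed by Proposition~\ref{invarianttree}. A remark on the rank (bounded by the number of $G$-orbits of geometric edges of $T$, in agreement with the remark following Theorem~\ref{mainthm}) would be worth including.
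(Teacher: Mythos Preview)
Your argument is correct and matches the paper's approach: the paper does not even write out a proof, merely stating that the lemma ``follows from Theorem~\ref{bassjiang}'' once specialised to the situation of Notation~\ref{mudefined}, and your proposal is exactly a spelled-out version of that specialisation. One minor point of interpretation: as stated, the lemma concerns the kernel of $\mu$ on all of $\Out_0^T(G)$, which is $K$ itself; the paper then separately observes that the intersection $K \cap C_0(\Phi)$ inherits the property, whereas you fold that step into the lemma --- but this is harmless and the content is identical.
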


The same will be true of the intersection of $C_0(\Phi)$ and this kernel, so in order to prove our result it will be enough to understand the image of $C_0(\Phi)$ under $\mu$.


We will show that the image,  $\mu(C_0)$, is isomorphic to a finite index subgroup of the product \[ \prod_{[v] \in V(T)/M} \MC(G_v;\{G_e\}_{\iota(e)=v};  \Phi^{t(v)} \mid_{G_v} ),\] where the product is taken over representatives $v$ of vertex orbits for the action of $M$ on $T$.  (Note that that there are only finitely many conjugacy classes of the $G_e$, since the $G$-action on $T$ is co-compact). 

We will show both inclusions: first that the image is contained in 
the given product, and then the converse. 


\subsection*{Characterising the image of \texorpdfstring{$\mathbf{\mu}$}{mu}:}

%
%
%

We need to show two additional facts about the image of $\mu$: that in fact it is sufficient to take one copy of each McCool group per $M$-orbit, and that the images in each factor are contained in the centraliser of $\Phi^{t(v)} \mid_{G_v}$.

\begin{prop}
	\label{projection}

	Consider a projection map (by choosing orbit representatives): 
	
	$$
	\rho: \prod_{[v] \in V(T)/G} \Out(G_v) \to \prod_{[v] \in V(T)/M} \Out(G_v). 
	$$
	
	Then $\ker(\rho) \cap \mu(C_0(\Phi)) = \{ 1\}$. 
	
	Moreover, for each $v$, $\mu_v(C_0(\Phi))$ is a subgroup of $\MC(G_v;\{G_e\}_{\iota(e)=v};  \Phi^{t(v)} \mid_{G_v} )$. 
	
	Hence $\mu(C_0(\Phi))$ is isomorphic via $\rho$ to a subgroup of $\prod_{[v] \in V(T)/M} \MC(G_v;\{G_e\}_{\iota(e)=v};  \Phi^{t(v)} \mid_{G_v} )$. 
	
\end{prop}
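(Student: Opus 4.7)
The plan is to exploit the fact that $\Psi \in C_0(\Phi)$ commutes with $\Phi$, and hence with every power of $\Phi$, both directly in $\Out(F_n)$ and (after choosing lifts) modulo inner automorphisms in $\Aut(F_n)$. Since $M$ is the preimage of $\langle \Phi \rangle$, any lift $\psi \in \widehat{C}(\Phi)$ normalises $M$ under conjugation in $\Aut(F_n)$; this compatibility is what makes both parts of the statement work.

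I would first set up lifts uniformly. Fix a lift $\phi \in M$ of $\Phi$. For any vertex $v$ of $T$, Proposition~\ref{invarianttree} supplies some $g_v \in G$ such that $\chi_v := \phi^{t(v)} g_v \in M_v$ fixes $v$ and restricts on $G_v$ to a representative of $\Phi^{t(v)} \mid_{G_v} \in \Out(G_v)$. Given $\Psi \in C_0(\Phi)$, pick a lift $\psi \in \widehat{C}_0(\Phi)$ and, for each relevant $v$, an element $y \in G$ with $v\psi = vy$; then $\widetilde\psi := \psi y^{-1}$ fixes $v$ and restricts on $G_v$ to a representative of $\mu_v(\Psi)$ in the sense of Definition~\ref{def:mu}.

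For the equivariance claim I would compute $[\widetilde\psi, \chi_v]$. Because $\Psi$ and $\Phi^{t(v)}$ commute in $\Out(F_n)$, the commutator $[\psi, \chi_v]$ lies in $\Inn(F_n)$; expanding $[\psi y^{-1}, \chi_v]$ as a product of $[\psi, \chi_v]$ with conjugates of inner automorphisms shows $[\widetilde\psi, \chi_v] \in \Inn(F_n)$ as well. But $\widetilde\psi$ and $\chi_v$ both fix $v$, so their commutator fixes $v$, and the subgroup of $\Inn(F_n)$ fixing $v$ is exactly $\Inn(G_v)$ (using that $G$ is centreless, so the $G$-stabiliser of $v$ is $G_v$). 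Restricting to $G_v$ then gives that $[\mu_v(\Psi), \Phi^{t(v)} \mid_{G_v}]$ is trivial in $\Out(G_v)$, which combined with Proposition~\ref{prop:prod_of_normal_McCool} places $\mu_v(\Psi)$ in the equivariant McCool group. For the kernel-triviality claim, suppose $\Psi \in C_0(\Phi)$ has $\mu(\Psi) \in \ker(\rho)$, so that $\mu_{v_0}(\Psi) = 1$ for the chosen $M$-orbit representative $v_0$, and let $v$ be another $G$-orbit representative in the same $M$-orbit, so $v = v_0 m$ for some $m \in M$. Conjugation by $m$ in $\Aut(F_n)$ sends $G_{v_0}$ isomorphically onto $G_v$, and the plan is to verify that $\mu_v(\Psi)$ is the image of $\mu_{v_0}(\Psi)$ under (the outer class of) this isomorphism. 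The verification uses that $[\psi, m] \in \Inn(F_n)$, which follows from $\psi$ commuting modulo inner with every element of $M$, and enables a substitution between a representative of $\mu_{v_0}(\Psi)$ stabilising $v_0$ and one stabilising $v$. Triviality of $\mu_{v_0}(\Psi)$ then forces triviality of $\mu_v(\Psi)$, and combining the two steps makes $\rho \circ \mu$ an injective homomorphism from $C_0(\Phi)$ into the claimed product.

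The main obstacle I expect is the book-keeping around the auxiliary elements $y$ and $m$ appearing in the definition of $\mu_v$: they are determined only up to the relevant stabiliser subgroups and hence introduce inner-automorphism ambiguity at every step. Each of the above computations must be arranged so that this ambiguity collapses upon passage to $\Out(G_v)$, and the essential input that makes this possible is precisely the fact that $\psi$ commutes modulo $\Inn(F_n)$ with the entire subgroup $M \leq \Aut(F_n)$.
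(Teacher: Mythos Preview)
Your proposal is correct and follows essentially the same route as the paper. The paper packages both claims into a single commuting diagram
\[
\begin{tikzcd}
C_0(\Phi) \ar{r}{\mu_v} \ar[equal]{d} & \Out(G_v) \ar{d}{\Ad(m)} \\
C_0(\Phi) \ar{r}{\mu_{vm}} & \Out(G_{vm})
\end{tikzcd}
\]
valid for every $m\in M$: taking $m$ carrying one $G$-orbit representative to another gives the kernel statement, while taking $m$ to be a lift of $\Phi^{t(v)}$ fixing $v$ gives the centralising statement. Your argument establishes the same relation $\mu_{vm}=\Ad(m)\circ\mu_v$, just by treating the two specialisations separately (a direct commutator computation for equivariance, then transport along $m$ for the kernel). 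One small slip in your final sentence: it is $\rho$ restricted to $\mu(C_0(\Phi))$ that is injective, not $\rho\circ\mu$ on $C_0(\Phi)$ itself, since $\mu$ already has the abelian Dehn-twist kernel.
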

\begin{proof}
	Recall that $M = F_n \rtimes_{\Phi} \Z$, and we can think of any $m \in M$ as an automorphism of $F_n$ via the conjugation action and this agrees with the inclusion $M \leq \widehat{C}(\Phi)$. Given $g \in F_n=G$ and $m \in M$, we write $g \cdot m$ for this image (note that this is, strictly speaking, $m^{-1} g m$, if we view both $g$ and $m$ as elements of $M$). 
	
	Moreover, $M$ acts on $T$, and this action can be viewed both as an extension of the action of $F_n$ and as arising from the fact $M$ is a subgroup of $\widehat{C}(\Phi)$. 
	
	Given $m \in M$, there is an isomorphism induced by $m$ between the automorphism (and outer automorphism) groups of $G_v$ and $G_{vm}=(G_v)\cdot m$. That is, take some $\varphi \in \Aut(G_v)$ and define an automorphism of $G_{vm}$ on elements $g\cdot m$ (for some $g \in G_v$) as 
	
	\begin{center}
		\begin{tikzcd}
			g\cdot m \ar{r}{m^{-1}} & g \ar{r}{\varphi} & g\varphi \ar{r}{m} & g \varphi \cdot m = (g \cdot m)  (m^{-1}  \phi  m),
		\end{tikzcd}
	\end{center}
	That is, $m^{-1}  \phi  m$ is the unique automorphism which makes the following diagram commute: 
	
	\begin{center}
		\begin{tikzcd}
			G_v  \ar{r}{m} \ar{d}{\phi} &G_{vm} \ar{d}{m^{-1} \phi m}\\
			G_v  \ar{r}{m} & G_{vm}
		\end{tikzcd}
	\end{center}

Moreover, if $\phi$ is an inner automorphism of $G_v$, then $m^{-1}  \phi  m$ will be an inner automorphism of $G_{vm}$. More precisely, if $\phi = \Ad(h)$ for some $h \in G_v$, then $m^{-1}  \phi  m = \Ad(h \cdot m)$. 

Denote by $\Ad(m)$ this map from $\Aut(G_v)$ to $\Aut(G_{vm})$ sending $\phi$ to $m^{-1}  \phi  m$. 	Then $\Ad(m)$ is an isomorphism (with inverse $\Ad(m^{-1})$). Since it preserves inner automorphisms, it induces an isomorphism between $\Out(G_v)$ and $\Out(G_{vm})$ which again we call $\Ad(m)$. 

Next we claim that the following diagram commutes for all elements $m$ of $M$ (viewed as a subgroup of $\widehat{C}(\Phi)$, and recalling that $\widehat{C}_0(\Phi)$ is normal in $\widehat{C}(\Phi)$):

\begin{center}
	\begin{tikzcd}
		\hat{C}_0(\Phi)  \ar{r}{\hat{\mu}_v} \ar{d}[swap]{\Ad(m)} &\Out(G_v) \ar{d}{\Ad(m)}\\
		\hat{C}_0(\Phi)  \ar{r}{\hat{\mu}_{v m}} &\Out(G_{v m})
	\end{tikzcd}
\end{center}

The maps $\hat{\mu}$ are as in Definition~\ref{def:mu}, the left hand $\Ad(m)$ is conjugation by $m$ within $\hat{C}_0(\Phi)$, and the right hand $\Ad(m)$ is the isomorphism defined above. To see the commutativity, consider an element $\chi$ of $\hat{C}_0(\Phi)$, and let $y$ be any element of $G$ so that $\chi\Ad(y)$ preserves $G_v$, or equivalently $\chi y$ stabilises $v$. In particular, $(\chi\Ad(y))\mid_{G_v}$ represents the image of $\chi$ under $\hat{\mu}_v$. 

The image of this map under $\Ad(m)$ is represented by $((m^{-1}\chi m)\Ad(y\cdot m))\mid_{G_{vm}}$. Notice that that $vm (m^{-1} \chi m)(m^{-1} y m) = v \chi y m = vm$. That is, $(m^{-1} \chi m)(m^{-1} y m)$ stabilises $vm$, and hence $(m^{-1} \chi m)\Ad(m^{-1} y m)$ preserves $G_{vm}$. Hence $ \hat{\mu}_{vm} (m^{-1} \chi m)$ is again represented by $((m^{-1}\chi m)\Ad(y\cdot m))\mid_{G_{vm}}$, which proves the claim.

Inner automorphisms in $\hat{C}_0(\Phi)$ are sent by $\hat{\mu}_v$ and $\hat{\mu}_{vm}$ to inner automorphisms of $G_v$ and $G_{vm}$, since the vertex groups are self-normalising. Since both maps labelled $\Ad(m)$ preserve inner automorphisms, we get the following commuting diagram: 

\begin{center}
	\begin{tikzcd}
		{C}_0(\Phi)  \ar{r}{{\mu}_v} \ar{d}[swap]{\Ad(m)} &\Out(G_v) \ar{d}{\Ad(m)}\\
		{C}_0(\Phi)  \ar{r}{{\mu}_{v m}} &\Out(G_{v m})
	\end{tikzcd}
\end{center}

But now notice that -- by definition, since $m$ is a representative of some power of $\Phi$ -- $\Ad(m)$ is the identity map on ${C}_0(\Phi) $. Hence we get a commuting triangle. 

\begin{center}
	\begin{tikzcd}
		& \Out(G_v) \ar{dd}{\Ad(m)} \\
		C_0(\Phi) \ar{ur}{\mu_v} \ar{dr}{\mu_{v m}} &\\
		& \Out(G_{vm})
	\end{tikzcd}
\end{center}

Since $\Ad(m)$ is an isomorphism, this shows that $\ker(\mu_v) = \ker(\mu_{vm})$ and hence that $\ker(\rho) \cap \mu(C_0(\Phi)) = \{ 1\}$, proving the first claim. 

For the second claim, we simply set $m$ to be any representative of  $\Phi^{t(v)}$ which fixes $v$. Hence $vm = v$, $\mu_{vm}=\mu_v$, and $\Ad(m)$ is conjugation by  $\Phi^{t(v)} \mid_{G_v}$ within $\Out(G_v)$. Then the commutativity of the triangle gives that the image of $\mu_v$ commutes with $\Phi^{t(v)} \mid_{G_v}$.

Since equivariant McCool groups arise as the intersection of a centraliser and a McCool group, this (together with our earlier fact that the image of $\mu_v$ is in a McCool group, Proposition~\ref{prop:prod_of_normal_McCool}), shows that $\mu_v(C_0(\Phi))$ lies in the equivariant McCool group $\Mc(G_v;\{G_e\};\Phi^{t(v)}\mid_{G_v})$. This proves the second claim and the final claim follows from the previous two. 
\end{proof}

Now we want to prove that $\rho(\mu(C_0(\Phi)))$ is a finite index subgroup of

\[\prod_{[v] \in V(T)/M} \MC(G_v;\{G_e\}_{\iota(e)=v};  \Phi^{t(v)} \mid_{G_v} )\]

Our strategy involves taking an element of  $\prod_{[v] \in V(T)/M} \MC(G_v;\{G_e\}_{\iota(e)=v};  \Phi^{t(v)} \mid_{G_v} )$ and realising it as a fibre-preserving automorphism of $M$. We use several ideas from \cite{Andrew2022}, though in different ways to that paper: for instance, we do not need to pass to a nearly canonical tree for our arguments here.

First we show that applying the extension of Lemma~\ref{prop:centraliser_fibre_preserving} to a finite index subgroup of each equivariant McCool group $ \MC(G_v;\{G_e\}_{\iota(e)=v};  \Phi^{t(v)} \mid_{G_v})$ produces automorphisms of $M_v$ whose outer classes have representatives that fix each incident $M_e$. (In particular, they are contained in a McCool group of $M_v$.)

\begin{prop}
	\label{prop:nice_mccool}

	Let $v$ be a vertex of $T$ not coming from a subdivision, so its $G$-stabiliser $G_v$ is a free group of rank at least 2 
	
	Let $M_v$ be the corresponding $M$ vertex stabiliser. So $M_v = G_v \rtimes_{\Phi^{t(v)} \mid_{G_v}} \Z$, and is virtually $G_v \times \Z$. 

	Let $\chi \in \Aut(G_v)$ be an automorphism whose outer class 
	lies in $ \MC(G_v;\{G_e\}_{\iota(e)=v};  \Phi^{t(v)} \mid_{G_v})$. Write $\chi_M$ to denote the automorphism of $M_v$ induced by $\chi$ via Lemma~\ref{prop:centraliser_fibre_preserving}. 
	
	Then: 
	
	\begin{itemize}
		\item $\chi_M$ induces well-defined outer automorphisms on the edge stabilisers, $M_e$, 
		\item If, for each $M_e$ which is isomorphic to a Klein bottle group, the outer automorphism induced by $\chi_M$ is trivial, then for all edges $e$ (without condition) there exists a $h_e \in G_v$ such that $w \chi_M = w^{h_e}$ for all $w \in M_e$. 
	\end{itemize}
Hence there exists a finite index subgroup, $N$, of 	$ \MC(G_v;\{G_e\}_{\iota(e)=v};  \Phi^{t(v)} \mid_{G_v})$ such that for any $\chi \in \Aut(G_v)$ whose outer class belongs to $N$ we have that there exist elements $h_e \in G_v$ with $w \chi_M = w^{h_e}$ for all $w \in M_e$.

%
%
%
%
%
%
%
\end{prop}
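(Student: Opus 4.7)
I would prove this in three stages: constructing $\chi_M$ and computing the key normalisers in $M_v$, verifying the first bullet, then extracting the conjugation-by-$h_e$ description under the Klein bottle hypothesis.

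To construct $\chi_M$, apply Proposition~\ref{prop:centraliser_fibre_preserving} with ambient group $M_v$ and normal subgroup $G_v$ (centreless as a free group of rank at least $2$). Since $M_v / G_v \cong \Z$ is generated by the image of $t_v$, whose conjugation action on $G_v$ represents $\Phi^{t(v)} \mid_{G_v}$, the equivariance of $\chi$ is precisely the condition $\chi \in \blahblah$ needed to invoke that proposition. The crucial structural fact is then the normaliser computation $N_{M_v}(G_e) = M_e$: an element $g t_v^l \in M_v$ normalises $G_e = \langle z \rangle$ only if $g \sigma^l(z) g^{-1} \in \{z, z^{-1}\}$ (writing $\sigma = \Ad(t_v) \mid_{G_v}$), which forces $l \in k_e \Z$ (the image of $M_e / G_e$ in $M / G = \Z$) and then pins down $g$ modulo $G_e$ by malnormality of $G_e$ in $G_v$. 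Since $G_e = M_e \cap G_v$ is canonically determined inside $M_v$, this also yields $N_{M_v}(M_e) = M_e$.

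For the first bullet, take a representative $\psi_e$ of the outer class of $\chi$ that fixes $G_e$ pointwise (available by the McCool condition), so that $\psi_e$ and $\chi$ differ by an inner automorphism of $G_v$. By uniqueness of the extension, $(\psi_e)_M$ and $\chi_M$ differ by the corresponding inner automorphism of $M_v$; in particular $(\psi_e)_M$ fixes $G_e$ pointwise, hence preserves $N_{M_v}(G_e) = M_e$, so $\chi_M(M_e)$ is a $G_v$-conjugate of $M_e$. The induced outer automorphism of $M_e$ is well-defined because $N_{M_v}(M_e) = M_e$ means different choices of representative alter the restriction only by elements of $\Inn(M_e)$.

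For the second bullet, assume that for every Klein bottle $M_e$ the induced outer class in $\Out(M_e)$ is trivial. Then $(\psi_e)_M \mid_{M_e} = \Ad_{M_e}(w_e)$ for some $w_e \in M_e$, and the pointwise fixing of $G_e$ forces $w_e \in C_{M_e}(G_e)$; for Klein bottle $M_e$ this centraliser is precisely $G_e \subseteq G_v$. For $\Z^2$ edges the restriction $(\psi_e)_M \mid_{M_e}$ fixes $G_e$ and preserves the $M / G$-projection, hence is a shear in $\Aut(\Z^2)$; I expect that the explicit form of $(t_v h)^{k_e}$ in the semidirect product, together with the defining relation $g_0 \sigma^{k_e}(z) g_0^{-1} = z$ of the $\Z^2$ case, forces this shear to be trivial. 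Combining the inner-automorphism data relating $\chi$ to $\psi_e$ with $w_e$ then produces an $h_e \in G_v$ satisfying $w \chi_M = w^{h_e}$ for all $w \in M_e$. The subgroup $N$ is the preimage of the identity under the homomorphism from the equivariant McCool group into $\prod_{[e]} \Out(M_e)$ over the $G_v$-orbits of incident Klein bottle edges, and has finite index because $\Out$ of the Klein bottle group is finite. The main obstacle will be verifying shear-triviality in the $\Z^2$ case --- a direct calculation in the semidirect product exploiting the commutation relation that controls the correction element $h$ with $\chi_M(t_v) = t_v h$.
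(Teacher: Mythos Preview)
Your overall architecture matches the paper's closely: both pass to the representative $\psi_e$ fixing $G_e$ pointwise (the paper calls it $\chi_e = \chi_M \Ad(h_e^{-1})$), both use that $M_e$ is the normaliser of $G_e$ in $M_v$ to get a well-defined outer automorphism on $M_e$, and both handle the Klein bottle case by recognising that the restriction is an inner automorphism coming from $G_e$. (A small correction: $C_{M_e}(G_e)$ is not literally $G_e$ in the Klein bottle case---it is the index-2 abelian subgroup $\langle z, s^2 \rangle$---but since $\langle s^2 \rangle$ is the centre you may adjust $w_e$ into $G_e$, so your conclusion survives.)

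The genuine gap is the $\Z^2$ case. You correctly reduce to showing the ``shear'' $y \in G_e$ vanishes, but the ingredients you list---the semidirect-product expansion of $(t_v h)^{k_e}$ and the commutation relation $g_0\sigma^{k_e}(z)g_0^{-1}=z$---do not by themselves force $y=1$; they only encode that $M_e\cong\Z^2$ and that $(\psi_e)_M$ is fibre-preserving. The missing idea, which the paper supplies, is to use that $\Phi^{t(v)}\!\mid_{G_v}$ has \emph{finite order} as an outer automorphism. This produces a nontrivial central element of $M_v$ of the form $(t^k g)^r z'$ with $z'\in G_v$; the commutation with $a$ then forces $z'\in G_e$. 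Any fibre- and coset-preserving automorphism of $M_v$ must fix this central element (it preserves the centre, which is infinite cyclic, and preserves $t$-exponent sums). Applying $\chi_e$ and using that $y,z'\in\langle a\rangle$ commute with $t^kg$ yields $(t^kg)^r z' y^r = (t^kg)^r z'$, whence $y^r=1$ and so $y=1$ in the torsion-free group $M_v$. Without invoking finite order somewhere, the shear need not vanish, so your ``direct calculation'' will not close.
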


%

\begin{proof}
	
		Denote by $t$ a pre-image in $M_v$ of the generator of the infinite cyclic quotient, so that conjugation by $t$ induces the (outer) automorphism $\Phi^{t(v)} \mid_{G_v}$.

		Consider some edge, $e$, and let $a=a_e$ be a generator for $G_e$. Then $M_e$ will be generated by $a$ and $t^k g$, where $0 \neq k \in \N$ and $g \in G_v$ both depend on $e$. Note that $G_e$ is a normal subgroup of $M_e$ and hence $a^{t^kg} = a^{\pm 1}$, and the corresponding $M$-edge stabiliser $M_e$ is a Klein bottle group precisely when $a^{t^kg} = a^{-1}$. 
		
		By hypothesis,  the outer class of $\chi$ belongs to $ \MC(G_v;\{G_e\}_{\iota(e)=v};  \Phi^{t(v)} \mid_{G_v})$, and hence there exists $h=h_e \in G_v$ such that $a \chi = a^{h}$. Since $\chi_M$ agrees with $\chi$ on $G_v$, we also get that  $a \chi_M = a^{h}$. 
		
		Define $\chi_e$ to be $\chi_M \Ad(h^{-1})$, lying in the same outer class as $\chi_M$, but fixing $a$. Then, by Lemma~\ref{prop:centraliser_fibre_preserving}, there exists a $y \in G_v$ such that $t^kg \chi_e = t^kgy$. But considering the relation  $a^{t^kg} = a^{\pm 1}$, this forces $y \in \langle a \rangle= G_e$, since $G_e$ is a maximal cyclic subgroup of a free group and hence equal to its own normaliser in $G_v$. 
		
		Thus $\chi_e$ restricts to an automorphism of $M_e$. While the choice of $h_e$ was not uniquely determined, any two choices will differ by an element of $\langle a \rangle$, so $\chi_M$ induces well defined outer automorphisms on each $M_e$. 
		
		Now we claim that whenever $M_e$ is free abelian of rank 2 -- when $a^{t^kg} = a$ -- then this forces $y=1$, and hence that $\chi_e$ is the identity map. This is is the same as saying that $w \chi_M = w^h$ for all $w \in M_e$. 
		
		To show this, recall that $\Phi^{t(v)} \mid_{G_v}$ is a finite order outer automorphism of $G_v$. This implies that there exists a $z \in G_v$ and an $0 \neq r \in \N$ such that $(t^kg)^r z$ is a central element of $M_v$ \cite[Proposition 4.1]{Levitt2015GBSrank}.
		
		In this case, we are assuming that $t^k g$ commutes with $a$ and hence $z$ must also commute with $a$, forcing $z \in \langle a \rangle = G_e$, again because $G_e$ is a maximal cyclic subgroup of a free group. 
		
		Hence, 
		\begin{itemize}
			\item $a \chi_e = a$, 
			\item $t^k g \chi_e = t^k g y$, $y \in \langle a \rangle$, 
			\item $(t^kg)^r z$ is a central element of $M_v$, where $0 \neq r \in \N$ and $z \in \langle a \rangle$. 
		\end{itemize}
		
		Moreover, since $\chi_M$ (and hence $\chi_e$) preserves $t$-exponent sums and the centre of $G_v$ is trivial, we also have that $\chi_M$ and $\chi_e$ both fix the central element  $(t^kg)^r z$. Putting this together, we get that: 
		$$
		(t^kg)^r z= \left\{ (t^kg)^r z \right\}\chi_e = (t^kg \chi_e)^r  z = (t^k g y)^r z = (t^k g)^r z y^r, 
		$$
		since $y,z \in \langle a \rangle$ and commute with $t^kg$. Hence $y=1$, as required.

		For the other case, we have that $M_e$ is a Klein Bottle group -- when $a^{t^kg} = a^{-1}$. Again we have that $ a \chi_e = a$ and $t^kg \chi_e = t^k g y$ for some $y \in \langle a \rangle$. If we further assume that $\chi_e$ is trivial as an outer automorphism, then this amounts to adding the restriction that $y \in \langle a^2 \rangle$. But then we get that $y = b^2$, where $b \in \langle a \rangle$ and we have that $a \chi_e = a^{b}$ and $t^kg \chi_e = (t^k g)^b$. Hence, by modifying $h_e$ (replacing it with $h_eb$) we deduce that  $w \chi_M = w^{h_e}$ for all $w \in M_e$.
		
		The last statement now follows immediately since the outer automorphism group of a Klein Bottle group is finite. \qedhere

\end{proof}
%
%

\begin{prop}
	\label{finiteindex}
	There is a finite index subgroup	
	$$\prod_{[v] \in V(T)/M} N_v  \leq \prod_{[v] \in V(T)/M} \MC(G_v;\{G_e\}_{\iota(e)=v};  \Phi^{t(v)} \mid_{G_v} ),$$ 
	such that for every element, $( X_v ) \in \prod_{[v] \in V(T)/M} N_v $, there exists a fibre and coset preserving automorphism $\chi_{\mathcal{M}}$ of $M$ such that: 
	
	\begin{enumerate}[(i)]
		\item $\chi_{\mathcal{M}} \in \Aut^T(M)$, 
		\item The restriction of $\chi_{\mathcal{M}}$ to $G$ lies in $\widehat{C}_0(\Phi) \leq \Aut^T(G)$, 
		\item For every (non-subdivision) vertex $v$ of $T$, the outer automorphism induced by $\chi_{\mathcal{M}}$ on $G_v$ is the same as that induced by $X_v$.
	\end{enumerate}

%
\end{prop}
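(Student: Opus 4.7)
The plan is to build $\chi_\mathcal{M}$ as the automorphism of $M = \pi_1(\GG_M)$ induced by a graph of groups automorphism of the $M$-quotient $\GG_M$ of $T$. The vertex-group data will be extensions of the given $\chi_v$ to the full $M$-vertex stabilisers $M_v$, obtained by combining Propositions~\ref{prop:nice_mccool} and~\ref{prop:centraliser_fibre_preserving}. The edge-group compatibility is then forced by the fact, from Proposition~\ref{prop:nice_mccool}, that each such extension acts on every adjacent $M$-edge stabiliser by conjugation by a $G_v$-element; these conjugations will be absorbed into the $\delta$-values of the graph of groups automorphism, all of which end up lying in $G$.

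\textbf{Construction.}
Let $N_v$ denote the finite-index subgroup of $\MC(G_v;\{G_e\}_{\iota(e)=v};\Phi^{t(v)}\mid_{G_v})$ provided by Proposition~\ref{prop:nice_mccool}. Given $(\chi_v) \in \prod_{[v]} N_v$, pick a representative of each $\chi_v$ in $\Aut(G_v)$ and, via Proposition~\ref{prop:centraliser_fibre_preserving}, extend it to a fibre-and-coset preserving automorphism $\chi_M^{(v)} \in \Aut(M_v)$. By Proposition~\ref{prop:nice_mccool}, for every edge $e$ incident to $v$ there is some $h_{v,e} \in G_v$ with $\chi_M^{(v)}\mid_{M_e} = \Ad(h_{v,e})$. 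Define a graph of groups automorphism of $\GG_M$ by: trivial graph map; the chosen $\chi_M^{(v)}$ at each $M$-orbit representative; trivial edge-group maps; and $\delta$-values obtained (after the conjugations dictated by Definition~\ref{def:quotient_gog}) from the $h_{v,e}^{-1}$, so that the required compatibility $\chi_M^{(\tau(e))} \circ \alpha_e = \Ad(\delta_e^{-1}) \circ \alpha_e$ holds by construction. Each $\delta_e$ then lies in $G_{\tau(e)} \subseteq G$. Let $\chi_\mathcal{M}$ be the resulting automorphism of $M$.

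\textbf{Verification.}
Property (i) is immediate from Proposition~\ref{prop:gog_auts_act}. For (ii), each $\chi_M^{(v)}$ is fibre-and-coset preserving for $G_v \trianglelefteq M_v$ and every $\delta_e$ lies in $G$, so the induced $\chi_\mathcal{M}$ is fibre-and-coset preserving for $G \trianglelefteq M$. In particular $\chi_\mathcal{M}\mid_G \in \Aut(G)$; picking $t \in M$ whose image generates $M/G \cong \Z$, coset preservation gives $\chi_\mathcal{M}(t) = tg$ for some $g \in G$, and so $\chi_\mathcal{M}\mid_G$ commutes with $\Phi$ up to the inner automorphism by $tg$, placing it in $\widehat{C}(\Phi)$. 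Coset preservation also forces the induced action of $\chi_\mathcal{M}$ on $T$ to send each vertex into its own $G$-orbit (writing any $v$ as $v_0 \cdot m$ from the base vertex, the image is $v_0 \cdot \chi_\mathcal{M}(m) = v \cdot g$ for some $g \in G$), giving membership in $\widehat{C}_0(\Phi)$. Property (iii) holds by construction: $\chi_\mathcal{M}\mid_{M_v}$ lies in the outer class of $\chi_M^{(v)}$, whose restriction to $G_v$ represents $\chi_v$.

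\textbf{Main obstacle.}
The delicate step is ensuring each $\chi_M^{(v)}$ acts as a $G_v$-inner automorphism on every adjacent $M_e$, since only then can the compatibility of the graph of groups automorphism be arranged with $\delta$-values in $G$. This is automatic when $M_e \cong \Z^2$, but can fail when $M_e$ is a Klein bottle group, because $\Out(M_e)$ then admits order-two elements not realised by $G$-conjugation. The finite-index passage to $N_v$ in Proposition~\ref{prop:nice_mccool} is precisely what rules out this pathology, at the cost of working with only a finite-index subgroup of the product of equivariant McCool groups.
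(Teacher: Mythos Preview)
Your proposal is correct and follows the same strategy as the paper: build a graph of groups automorphism of the $M$-quotient $\mathcal{M}$ with trivial underlying graph map, the extended automorphisms $\chi_M^{(v)}$ on vertex groups, identity on edge groups, and the conjugators $h_e$ as the $\delta$-values, then read off (i)--(iii). The one step the paper treats more carefully is your assertion that local coset preservation plus $\delta_e \in G$ immediately forces $\chi_{\mathcal{M}}$ to be coset preserving for $G \trianglelefteq M$: since the edges in $P(\mathcal{M})$ are not elements of $M$ one cannot simply reduce modulo $G$ term by term, and the paper handles this by invoking Proposition~\ref{extendhoms} to extend $\rho: M \to \Z$ to a homomorphism $\hat{\rho}$ on the whole path group and then checking $x^{-1}(x\chi_{\mathcal{M}}) \in \ker\hat{\rho}$ on path-group generators.
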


\begin{proof}
	Since $T$ is an $M$-tree, there is a corresponding graph of groups $\mathcal{M}$ as constructed in Definition~\ref{def:quotient_gog}. In particular, we use the notation established there.

	

	
	
	
	The finite index subgroups $N_v$ are those provided by Proposition~\ref{prop:nice_mccool}, and their product is a finite index subgroup of $\prod_{[v] \in V(T)/M} \MC(G_v;\{G_e\}_{\iota(e)=v};  \Phi^{t(v)} \mid_{G_v} )$. 
	
	To start with, pick a representative $\chi_v$ of each outer automorphism $X_v$ 
	and apply Proposition~\ref{prop:nice_mccool} to produce an automorphism (which we will again call $\chi_v$)  of $M_v$ such that there exist elements, $h_e \in G_v$ with $w \chi_v = w^{h_e}$ for all $w \in M_e$. Note that each $\chi_v$ is a fibre and coset preserving automorphism of $M_v$ with respect to $G_v$. For the subdivision vertices, if any, let $\chi_v$ be the identity automorphism, and note that by definition this is fibre and coset preserving. For convenience in what follows, additionally set $h_e =1$ for the edge adjacent to $v$ (if the subdivision is required, there is exactly one).
	

	We can now define a graph of groups isomorphism of $\mathcal{M}$: the underlying graph map should be the identity, the edge group homomorphisms the identity, and the vertex group maps the automorphisms $\chi_v$ from above. 
	For an edge $e$ of $\mathcal{M}$ put $e^{+}=e \cdot g_e^{+}$, $e^{-}=e \cdot g_e^{-}$, $v= \tau(e) \cdot g_e^{+}$ and $u= \iota(e) \cdot g_e^{-}$. Finally, set $\delta_e = h_{e^{+}}$, so that the image of $e$ in the induced map on the path group $P(\mathcal{M})$ will be $ h_{e^{-}}^{-1}  e h_{e^{+}}$. 
	
	(Recall that the elements $g_e^{-}, g_e^{+}$ are defined in Definition~\ref{def:quotient_gog}. Namely, we describe the quotient graph by taking a representative of each orbit, and these group elements are defined so that $\iota(e) g_e^{-} , \tau(e) g_e^{+}$ equal their respective representatives. They are only unique up to the relevant vertex stabiliser.) 
	
	
	To see that this is a graph of groups isomorphism, it suffices to check the edge relations in the path group are satisfied.  Observe that if $x \in M_e$, then $\alpha_e(x) = x^{g_e^{+}}$ and $\alpha_{\overline{e}}(x) = x^{g_e^{-}}$. 
	
	Moreover, $\alpha_e(x)\chi_v = (x^{g_e^{+}})\chi_v =  x^{g_e^{+}h_{e^+}}$ and similarly $( x^{g_e^{-}})\chi_u =  x^{g_e^{-}h_{e^-}}$ for all $x \in M_e$. It follows that both sides of the relation $e \alpha_e(x) \overline{e} = \alpha_{\overline{e}}(x)$ are sent to $x^{g_e^-h_{e^{-}}}$ by our map.
	
	To summarise, we have produced a graph of groups automorphism, $\chi_{\mathcal{M}}$ with the following effect on generators of $P(\mathcal{M})$: 
	$$
	\begin{array}{rcll}
		m_v \chi_{\mathcal{M}} & = & m_v \chi_v, &\text{for all } m_v \in M_v \\
		e\chi_{\mathcal{M}} & = & h_{e^{-}}^{-1}  e h_{e^{+}} & \text{for all edges } e.
	\end{array}
	$$
	
	As this is a graph of groups automorphism, we get that $\chi_{\mathcal{M}} \in \Aut^T(M)$ by Proposition~\ref{prop:gog_auts_act}.

	We want to show that  $\chi_{\mathcal{M}}$ restricts to an automorphism of $G$ that commutes with $\Phi$ as an outer automorphism. So we invoke Lemma~\ref{prop:centraliser_fibre_preserving}. Hence, we need to show that  $\chi_{\mathcal{M}}$ is a fibre and coset preserving automorphism of $M$ with respect to $G$, as in Definition~\ref{def:coset_fibre_preserving}. Note that showing this is equivalent to showing that $m^{-1} (m \chi_{\mathcal{M}}) \in G$ for all $m \in M$.  We will do this in stages.

	For this, note that we have a map, $\rho: M \to \Z$ whose kernel is $G$ and such that, for any $v$, $\ker(\rho) \cap M_v=G_v$. We then invoke Proposition~\ref{extendhoms} to extend $\rho$ to a homomorphism, $\hat{\rho}: P(\mathcal{M}) \to \Z$. We work with the induced automorphism of the path group defined above. 

	To this end, let $K$ be the normal closure of the subgroups $G_v$ in the path group, $P(\mathcal{M})$. We claim that, for every generator, $x$, of $P(\mathcal{M})$ we have that $x^{-1} (x \chi_{\mathcal{M}}) \in K$. If $x \in M_v$, then $x^{-1} (x \chi_{\mathcal{M}}) = x^{-1} (x \chi_{v}) \in G_v$, since $\chi_v$ is a fibre and coset preserving automorphism of $M_v$ with respect to $G_v$. If $x=e$, an edge, then  $e^{-1} (e \chi_{\mathcal{M}}) = e^{-1} h_{e^{-}}^{-1}  e h_{e^{+}} \in K$, since $h_{e^{-}} \in G_u$ and $h_{e^{+}} \in G_v$ for some vertices $u$ and $v$. 
	
	But since $x^{-1} (x \chi_{\mathcal{M}}) \in K$ for a generating set, it follows using normality of $K$ that this holds for all elements of $P(\mathcal{M})$. 
	But now, if $x \in M$, then  $x^{-1} (x \chi_{\mathcal{M}}) \in K \cap M$. Since the kernel of each $\rho_v:= \rho\mid_{M_v}$ is $G_v$, it follows that $K \leq \ker(\hat{\rho})$ and so $K \cap M \leq  \ker(\hat{\rho}) \cap M = \ker(\rho)=G$, since $\hat{\rho}$ extends $\rho$. 
	
	Thus we have shown that $x^{-1} (x \chi_{\mathcal{M}})  \in G$ for all $x \in M$. This shows that $\chi_{\mathcal{M}}$ is a fibre and coset preserving automorphism of $M$ with respect to $G$. It immediately follows that the restriction of $\chi_{\mathcal{M}}$ to $G$ lies in $\Aut^T(G)$. In fact, by Lemma~\ref{prop:centraliser_fibre_preserving}, the restriction of $\chi_{\mathcal{M}}$ to $G$ lies in $\widehat{C}_0(\Phi)$. 
	
	Moreover, by construction, for every vertex $v$ of $T$, the outer automorphism induced by $\chi_{\mathcal{M}}$ on $G_v$ is the same as that induced by $\chi_v$. \qedhere

\end{proof}

We are now in a position to prove our main theorem.
\centralisers*

\begin{proof}
We define $C_0(\Phi)$ and $\mu$ as in Notation~\ref{mudefined}. The kernel of $\mu$ is a finitely generated free abelian group, by Theorem~\ref{bassjiang}. The image of $\mu$ is isomorphic to a finite index subgroup $\prod_{[v] \in V(T)/M} \MC(G_v;\{G_e\}_{\iota(e)=v};  \Phi^{t(v)} \mid_{G_v} )$, since by Proposition~\ref{projection} $\mu(C_0(\Phi)))$ is isomorphic via $\rho$ to some subgroup of this product, and by Proposition~\ref{finiteindex} $\rho(\mu(C_0(\Phi)))$ contains its finite index subgroup $N$. The last statement, that each equivariant McCool group has type VF, is given by Theorem~\ref{mccool}. \qedhere

\end{proof}

					%
					%
					%
					%
					%
					%

\bibliographystyle{plain}

\end{document}